\crefname{equation}{}{}
\crefname{lemma}{Lemma}{Lemmas}
\crefname{page}{p.}{pp.}
\numberwithin{equation}{section}
\theoremstyle{plain}
\newtheorem{theorem}{Theorem}[section]
\newtheorem{lemma}{Lemma}[section]
\theoremstyle{definition}
\newtheorem{definition}{Definition}[section]
\newtheorem{remark}{Remark}[section]
\def\now{%
\minute=\time%
\hour=\time \divide \hour by 60%
\hourMins=\hour \multiply\hourMins by 60%
\advance\minute by -\hourMins%
\zeroPadTwo{\the\hour}:\zeroPadTwo{\the\minute}%
}
\def\zeroPadTwo#1{\ifnum #1<10 0\fi#1}
\renewcommand{\cite}{\citet}
\def\^#1{\ifmmode {\mathaccent"705E #1} \else {\accent94 #1} \fi}
\def\~#1{\ifmmode {\mathaccent"707E #1} \else {\accent"7E #1} \fi}
\def\*#1{#1^\ast}
\edef\-#1{\noexpand\ifmmode {\noexpand\bar{#1}} \noexpand\else \-#1\noexpand\fi}
\def\>#1{\vec{#1}}
\def\.#1{\dot{#1}}
\def\atop{\@@atop}
\def\*#1{\mathscr{#1}}
\renewcommand{\leq}{\leqslant}
\renewcommand{\geq}{\geqslant}
\newcommand{\eq}{\eqref}
\newcommand{\IE}{\mathbbm{E}}
\newcommand{\IP}{\mathbbm{P}}
\newcommand{\Var}{\mathop{\mathrm{Var}}\nolimits}
\newcommand{\IZ}{\mathbbm{Z}}
\newcommand{\IR}{\mathbb{R}}
\def\be#1{\begin{equation*}#1\end{equation*}}
\def\ben#1{\begin{equation}#1\end{equation}}
\def\bes#1{\begin{equation*}\begin{split}#1\end{split}\end{equation*}}
\def\besn#1{\begin{equation}\begin{split}#1\end{split}\end{equation}}
\def\beqn#1\eeqn{\begin{align}#1\end{align}}
\def\beq#1\eeq{\begin{align*}#1\end{align*}}
\def\E{{\IE}}
\def\P{{\IP}}
\def\blfootnote{\xdef\@thefnmark{}\@footnotetext}
\begin{document}

\title{
Edgeworth Expansion by Stein's Method
}
\author{Xiao Fang and Song-Hao Liu}
\date{\it \small The Chinese University of Hong Kong and Southern University of Science and Technology} %\\[2ex]  \today %, \now}
\maketitle

\noindent{\bf Abstract:} 
Edgeworth expansion provides higher-order corrections to the normal approximation for a probability distribution. The classical proof of Edgeworth expansion is via characteristic functions.
As a powerful method for distributional approximations, Stein's method has also been used to prove Edgeworth expansion results. However, these results assume that either the test function is smooth (which excludes indicator functions of the half line) or that the random variables are continuous (which excludes random variables having only a continuous component). Thus, how to recover the classical Edgeworth expansion result using Stein's method has remained an open problem. In this paper, we develop Stein's method for two-term Edgeworth expansions in a general case.
Our approach involves repeated use of Stein equations, Stein identities via Stein kernels, and a replacement argument.

\medskip

\noindent{\bf AMS 2020 subject classification: }  60F05, 62E17

\noindent{\bf Keywords and phrases:} Central limit theorem, Edgeworth expansion, Stein kernel, Stein's method. %Stochastic differential equation.
%\begin{keyword}[class=AMS]
%\kwd[Primary ]{60F05,62E20} \kwd[; secondary ]{62L10}
%\end{keyword}

\section{Introduction and main results}

Let $X_1, X_2, \dots$ be a sequence of independent and identically distributed (i.i.d.) random variables with mean 0 and variance 1. Let
$W_n=(X_1+\dots+X_n)/\sqrt{n}, n\geq 1$.
The central limit theorem states that as $n\to \infty$, the distribution of $W_n$ converges to the standard normal distribution. Under the finite third moment assumption, we have the following non-asymptotic error bound [\cite{berry1941accuracy,esseen1942liapunoff}]:
\ben{\label{eq:berry}
\sup_{x\in \mathbb{R}}|\P(W_n\leq x)-P(Z\leq x)|\leq \frac{\E |X_1|^3}{\sqrt{n}},
}
where $Z\sim N(0,1)$ is a standard normal random variable.
%The rate of convergence of the above bound (i.e., $n^{-1/2}$) is optimal.
%Results such as \eq{11} provide large-sample properties for unknown populations and are the basis of statistical inference.

In general, the $1/\sqrt{n}$ rate in \eq{eq:berry} cannot be improved by considering the normal approximation of binomial distributions. However, under additional smoothness conditions, it is possible to devise asymptotic expansions incorporating higher-order correction terms for normal approximation. Such asymptotic expansions are called Edgeworth expansions. For example, if we assume that $X_1$ has a finite fourth moment and its characteristic function $\varphi(t)$ satisfies the Cram\'er condition:
\ben{\label{eq:cramer}
\limsup_{|t|\to \infty} |\varphi(t)|<1,
}
then we have
\ben{\label{eq:intro-0}
\sup_{x\in \mathbb{R}}\Big|\P(W_n\leq x)-P(Z\leq x)-\frac{\gamma}{6\sqrt{n}}\E \big[(Z^3-3Z) 1_{\{Z\leq x\}}\big]\Big|\leq \frac{C}{n},
}
where $\gamma=\E [X_1^3]$ and $C$ is a constant not depending on $n$.
The Cram\'er condition is satisfied if the distribution of $X_1$ has a nonzero and absolutely continuous component with respect to the Lebesgue measure.
We refer to \cite{petrov1975sums} and \cite{BR76} for classical Edgeworth expansion results with error bounds in both continuous and discrete cases and in multi-dimensions. As the condition \eq{eq:cramer} indicates, all of the classical proofs of Edgeworth expansion results are obtained via the characteristic function approach.

Stein's method was introduced in \cite{stein1972bound} and has become a main tool for distributional approximations (cf. \cite{ChGoSh11}).
Stein's method for asymptotic expansions was first studied by \cite{barbour1986asymptotic}. In the case of two-term Edgeworth expansion, he proved that (dropping the Cram\'er condition but assuming $h$ is second-order differentiable, see also \cite[Eq.(3.5)]{fang2019wasserstein})
\ben{\label{eq:intro-1}
\Big|\E h(W_n)-\E h(Z)-\frac{\gamma}{6\sqrt{n}}\E \big[(Z^3-3Z) h(Z)\big]\Big|\leq \frac{C}{n} \|h''\|_\infty,
}
where $\|h''\|_\infty:=\sup_{x\in \IR} |h''(x)|$.
See \cite{rinott2003edgeworth} for expansions of smooth test functions under local dependence, \cite{fang2020refined} for tail probability approximations, and \cite{braverman2022high} for non-normal approximations. 
\cite{kim2018edgeworth} considered asymptotic expansions for Gaussian functionals and \cite{fathi2021higher}  considered asymptotic expansions using higher-order Stein kernels and obtained Wasserstein bounds for normal approximations.

In summary, to obtain a rate faster than $n^{-1/2}$ in asymptotic expansions using Stein's method, studies have thus far assumed that (1) the test function is smooth, or (2) the random variable is continuous. Thus, how to recover the classical Edgeworth expansion result has remained an open problem. In this paper, we take the first step toward solving this problem by proving the following two-term Edgeworth expansion theorems using Stein's method.

\begin{theorem}[Continuous two-term Edgeworth expansion]\label{t1}
Let $n\geq 1$ and $X_1,\dots, X_n$ be i.i.d.\ with $\E X_1=0, \E X_1^2=1, \E X_1^3=\gamma, \E X_1^4<\infty$. Suppose $X_1$ has a compactly supported continuous component with density bounded away from 0. Let $W=\frac{1}{\sqrt{n}}\sum_{i=1}^n X_i$. Then, for all $h: \IR\to \IR$ with $\|h\|_\infty \leq 1$, we have
\ben{\label{eq:t1-0}
\Big|\E h(W)-\E h(Z)-\frac{\gamma}{6\sqrt{n}}\E \big[(Z^3-3Z) h(Z) \big]  \Big|\leq \frac{C}{n},
}
where $Z\sim N(0,1)$ and $C$ is a constant not depending on $n$. When $h(w)=1_{\{w\leq x\}}$, the correction term is $$\frac{\gamma}{6\sqrt{2\pi n}}\int_{-\infty}^x (y^3-3y) e^{-y^2/2} dy,$$ which is the same as in the classical Edgeworth expansion.
\end{theorem}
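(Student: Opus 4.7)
The plan is to apply the standard normal Stein equation for $h$, extract the Edgeworth correction from a leave-one-out Taylor expansion, and exploit the smoothness that the continuous-component hypothesis forces on the density of $W^{(1)} := (X_2 + \cdots + X_n)/\sqrt n$ to absorb every higher derivative of the Stein solution via integration by parts. Let $f_1$ solve $f_1'(w) - w f_1(w) = h(w) - \E h(Z)$, so that $\E h(W) - \E h(Z) = \E[f_1'(W) - W f_1(W)]$. Writing $W = W^{(1)} + X_1/\sqrt n$ with $X_1 \indep W^{(1)}$, Taylor-expanding $f_1$ (inside $\E W f_1(W)$) and $f_1'$ (inside $\E f_1'(W)$) to third order around $W^{(1)}$, and using $\E X_1 = 0$, $\E X_1^2 = 1$, $\E X_1^3 = \gamma$, I obtain
$$
\E\bigl[f_1'(W) - W f_1(W)\bigr] = -\frac{\gamma}{2\sqrt n}\,\E f_1''(W^{(1)}) + \frac{3 - \E X_1^4}{6n}\,\E f_1'''(W^{(1)}) + R,
$$
where $R$ collects the third-order Taylor remainders in both terms.

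Differentiating the Stein PDE three times and taking expectation under $Z \sim N(0,1)$ yields $\E f_1''(Z) = -\tfrac{1}{3}\,\E h'''(Z)$, while three Gaussian integrations by parts give $\E h'''(Z) = \E[(Z^3 - 3Z) h(Z)]$ (read distributionally, valid for bounded $h$). Hence $-\tfrac{\gamma}{2\sqrt n}\,\E f_1''(Z)$ coincides exactly with the claimed Edgeworth correction, and to finish the proof it suffices to show that (i) $|\E f_1''(W^{(1)}) - \E f_1''(Z)| = \mathrm{O}(n^{-1/2})$, so that the main-term replacement error contributes $\mathrm{O}(1/n)$, and (ii) $|R|$ together with the $1/n$ term is genuinely $\mathrm{O}(1/n)$. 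Both are delicate because, when $h$ is not smooth (say $h = 1_{\{\cdot \leq x\}}$), $f_1''$ contains a Dirac mass at $x$ and $f_1'''$ is worse, so na\"ive bounds via $\|f_1^{(k)}\|_\infty$ diverge.

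The continuous-component hypothesis --- the Stein-method analogue of the Cram\'er condition --- is precisely what rescues the argument. Because each $X_i$ has a compactly-supported continuous component of density bounded below, a local-CLT argument gives $p_{W^{(1)}} \in C^k$ for any fixed $k$ with $\|p_{W^{(1)}}^{(j)} - \phi^{(j)}\|_\infty = \mathrm{O}(n^{-1/2})$. Using this smooth density, I apply Stein identities (via Stein kernels of $W^{(1)}$) to integrate by parts and trade each derivative on $f_1$ for one on $p_{W^{(1)}}$; for instance $\E f_1'''(W^{(1)}) = -\int f_1''(w)\,p'_{W^{(1)}}(w)\,dw$ is bounded because the Dirac piece of $f_1''$ pairs with the continuous $p'_{W^{(1)}}$. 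The replacement $W^{(1)} \rightsquigarrow Z$ inside $\E f_1''(\cdot)$ is then handled by splitting $f_1''$ into a smooth bounded part (controlled by a standard $\mathrm{O}(n^{-1/2})$ Stein bound for normal approximation of $W^{(1)}$ with that smooth part as test function) and a point-mass part (controlled by $|p_{W^{(1)}}(x) - \phi(x)| = \mathrm{O}(n^{-1/2})$). The main obstacle is the Taylor remainder $R$, whose raw form involves $f_1'''$ at a mean-value point between $W^{(1)}$ and $W$: I expect to handle it by introducing a second Stein equation $f_2'(w) - w f_2(w) = g(w) - \E g(Z)$ in which $g$ encodes the offending integrand, invoking the universal Stein bounds $\|f_2\|_\infty,\|f_2'\|_\infty = \mathrm{O}(1)$, and then applying a replacement argument that conditions on the discrete part of the $X_i$'s so that every surviving singular piece of $f_1''$ is paired with a continuous conditional density. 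Keeping this bookkeeping clean --- ensuring Dirac contributions never meet other singular objects --- is the technically hardest step.
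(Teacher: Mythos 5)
Your extraction of the candidate main term is consistent with the heuristic the paper also starts from, but there is a genuine gap exactly at the point you yourself flag as ``the technically hardest step,'' and the tools you propose for closing it are either unproven at the required strength or circular. The remainder $R$ involves $f_1'''$ (for $h=1_{\{\cdot\le x\}}$, already $f_1''$ contains a Dirac mass at $x$ and $f_1'''$ a derivative of a Dirac) evaluated at an intermediate point between $W^{(1)}$ and $W$ that is \emph{not} independent of $X_1$; to integrate it against the law of $W^{(1)}$ you must pass to the integral form of the remainder, and even then you need $\|p'_{W^{(1)}}\|_\infty$, $\|p''_{W^{(1)}}\|_\infty$ bounded uniformly in $n$ together with $\|p^{(j)}_{W^{(1)}}-\phi^{(j)}\|_\infty=\mathrm{O}(n^{-1/2})$. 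That last statement is a local CLT/Edgeworth expansion for densities and their derivatives: it is classically proven via characteristic functions under integrability of $|t|^j|\varphi(t)|^k$, it does not follow elementarily from ``compactly supported continuous component with density bounded away from $0$'' (a lower bound on the density gives no decay of the characteristic function), and it is of comparable depth to the theorem being proved --- if you may import it, the theorem follows far more directly; if not, your estimate of $R$ is unproven. Note also that $W^{(1)}$ has no density at all on the (exponentially rare, but still to be handled) event that no $X_i$, $i\ge 2$, takes its continuous component. The sentence ``I expect to handle it by introducing a second Stein equation \dots and then applying a replacement argument'' is a plan, not a proof, and it is precisely here that naive Taylor expansion of the Stein solution is known to stall at rate $n^{-1/2}$.

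The paper's route is structurally different and avoids Taylor-expanding $f_1$ altogether. Its key lemma (\cref{l1}) treats sums of i.i.d.\ variables possessing a square-integrable Stein kernel by using the \emph{exact} identity $\E[Wf(W)]=\E[\tau(W)f'(W)]$ and then a second Stein equation for $f'-\E f'(Z)$, so the only function ever hit with two derivatives is the second solution $g$, which has bounded $g$ and $g'$ for bounded $h$. The theorem is then reduced to this lemma by writing $X_1$ as a mixture of its continuous component $U_1$ and the remainder $\tilde U_1$, representing $W$ through a $\mathrm{Bin}(n,p)$ count $L$ of continuous summands, and swapping blocks for Gaussians in stages: \cref{l1} applies conditionally to the continuous block (which has a Stein kernel), the remaining block is handled by smooth-test-function expansions because convolution with the Gaussian block smooths $h$, and \cref{l1} is applied once more to the resulting Gaussian mixture. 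None of this requires a derivative local CLT. To salvage your approach you would need either to adopt such a decomposition or to supply a complete, characteristic-function-free proof of the density expansion you invoke together with a rigorous treatment of the integral-form remainder; as written, the argument is incomplete.
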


Although our condition is stronger than the Cram\'er condition, it is a natural condition because we do not use characteristic functions in the proof. It is possible but highly tedious to track the dependence of $C$ on the distribution of $X_1$, as it depends, for example, on the probability $p$ that $X_1$ takes the continuous component and on the density function of the continuous component. Therefore, we do not pursue this in this paper.

The difficulty in proving \eq{eq:intro-0} using Stein's method lies in the fact that using Taylor's expansion unavoidably results in an error of order $1/\sqrt{n}$.
To prove \eq{eq:t1-0}, we use an approach involving (a) repeated use of Stein equations, (b) Stein identities via Stein kernels, and (c) a replacement argument.
Asymptotic expansion using Stein kernels was considered by \cite{fathi2021higher}.
However, Fathi's approach does not appear to work for the expansion of distribution functions because (1) the existence of a Stein kernel requires the random variable to be continuous, and (2) his definition of higher-order Stein kernels involves higher-order derivatives of Stein equation solutions, which lack sufficient regularity.

Next, we consider the discrete case.

\begin{theorem}[Discrete two-term Edgeworth expansion]\label{t2}
Let $n\geq 1$ and $X_1,\dots, X_n$ be i.i.d., integer valued random variables with $\E X_1=\mu, \Var(X_1)=\sigma^2>0, \E (X_1-\mu)^3=\sigma^3 \gamma, \E X_1^4<\infty$. Suppose the support of $X_1$ is $\{s_0,s_1,s_2,\dots\}$ such that the greatest common divisor of $\{|s_i-s_0|,i\geq 1\}$ is 1. Let $W=\frac{1}{\sigma\sqrt{n}}\sum_{i=1}^n (X_i-\mu)$. Then, for all $h: \IR\to \IR$ such that $\|h\|_\infty \leq 1$ and $h(x)$ equals a constant (depending on $z$) for  $x\in (z-\frac{1}{2\sigma \sqrt{n}}, z+\frac{1}{2\sigma \sqrt{n}})$ for any $z$ in the support of $W$, we have
\ben{\label{eq:t2-0}
\Big|\E h(W)-\E h(Z)-\frac{\gamma}{6\sqrt{n}}\E \big[(Z^3-3Z) h(Z) \big]  \Big|\leq \frac{C}{n},
}
where $Z\sim N(0,1)$ and $C$ is a constant not depending on $n$. 
\end{theorem}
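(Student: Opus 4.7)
The plan is to reduce Theorem~\ref{t2} to a non-i.i.d.\ version of Theorem~\ref{t1} via a smoothing argument.

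First, by the hypothesis on $h$, the test function is constant on each cell $(z-\tfrac{1}{2\sigma\sqrt{n}},\, z+\tfrac{1}{2\sigma\sqrt{n}})$ as $z$ ranges over the lattice support of $W$, and these cells partition $\IR$ up to a null set (by the gcd condition). Introducing an independent $U\sim \mathrm{Unif}(-\tfrac{1}{2\sigma\sqrt{n}},\tfrac{1}{2\sigma\sqrt{n}})$ gives $\E h(W)=\E h(\tilde W)$ with $\tilde W:=W+U$, which is now absolutely continuous. It therefore suffices to bound
\[
\Bigl|\E h(\tilde W)-\E h(Z)-\tfrac{\gamma}{6\sqrt{n}}\E[(Z^3-3Z)h(Z)]\Bigr|
\]
by $C/n$.

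Next, I would absorb the smoothing into a single summand by setting $\tilde X_1:=X_1+\sigma\sqrt n\, U$ and $\tilde X_i:=X_i$ for $i\ge 2$, so that $\tilde W=\tfrac{1}{\sigma\sqrt n}\sum_{i=1}^n(\tilde X_i-\mu)$. The $\tilde X_i$ are independent; $\tilde X_1$ has a uniform component on $(-\tfrac12,\tfrac12)$ with density bounded away from $0$, matching Theorem~\ref{t1}'s continuous-component hypothesis but only for one summand. A short computation gives $\E\tilde W=0$, $\Var(\tilde W)=1+\tfrac{1}{12\sigma^2 n}$, and $\E\tilde W^3=\gamma/\sqrt n$ (the symmetry of $U$ removes its contribution to the third central moment), so the leading Edgeworth constant is unchanged.

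Then I would rerun Theorem~\ref{t1}'s three-ingredient proof---repeated Stein equations, Stein identities via the Stein kernel $\tau_{\tilde W}$, and the replacement argument---on the independent (but no longer identically distributed) sequence $\tilde X_1,\dots,\tilde X_n$. The Stein kernel $\tau_{\tilde W}$ is well-defined because $\tilde W$ has a density (the convolution of the distribution of $W$ with that of $U$). In the replacement argument I would exchange only the discrete summands $\tilde X_i$ with $i\ge 2$, keeping the ``pivot'' $\tilde X_1$ in place throughout the induction so that every partial sum retains a density and the Stein-kernel estimates remain valid. The $O(1/n)$ deviation of $\Var(\tilde W)$ from $1$ contributes only $O(1/n)$ corrections, which are absorbed into the error term.

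The main obstacle will be showing that the Stein-kernel regularity used in Theorem~\ref{t1} is still uniform in $n$ when only a single summand is continuous. Concretely, one needs bounds on $\tau_{\tilde W}$ and on higher derivatives of Stein-equation solutions composed with $\tilde W$ that do not degenerate as $n\to\infty$, and these must be derived from a convolution or Fourier-type analysis of the density of $\tilde W$ using only the contribution of $\tilde X_1$. A secondary issue is auditing the replacement argument to check that the pivot summand's distinct distribution does not disturb the third-cumulant correction; this should be harmless because $\tilde X_1$ and $X_1$ share the same mean and third central moment.
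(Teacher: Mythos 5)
Your opening move --- observing that $h(W)=h(W+U)$ for $U\sim\mathrm{Unif}(-\tfrac{1}{2\sigma\sqrt n},\tfrac{1}{2\sigma\sqrt n})$ --- is a correct and natural use of the hypothesis on $h$, but the reduction that follows has a fatal gap. \cref{l1} (and the whole Stein-kernel machinery behind \cref{t1}) does not use a Stein kernel of $W$ itself; it uses a Stein kernel $\tau_1$ of \emph{each summand}, writes $\E[Wf(W)]=\frac1n\sum_i\E[\tau_1(X_i)f'(W)]$, and then exploits the fact that $\tau=\frac1n\sum_i\tau_1(X_i)$ is an average of $n$ i.i.d.\ terms, so that $\E[(1-\tau)^2]=O(1/n)$. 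In your construction only $\tilde X_1$ has a Stein kernel; $\tilde X_2,\dots,\tilde X_n$ remain lattice-valued and have none, so the very first identity of the proof is unavailable, and there is no averaging to make the kernel concentrate. Working instead with the kernel of $\tilde W$ directly gives only the trivial identity $\E h(\tilde W)-\E h(Z)=\E[(1-\tau_{\tilde W}(\tilde W))f'(\tilde W)]$, which does not decompose over summands and cannot be iterated the way \cref{l1} iterates. The second half of your plan has the same defect: the replacement/Lindeberg step in the proof of \cref{t1} needs the test function seen by the discrete summands to have third derivatives bounded \emph{uniformly in $n$}, and that smoothness is supplied there by convolving with roughly $np$ Gaussians (via \cref{lem:smoothbound}, the derivative bounds scale like $\sigma^{-k}$ with $\sigma$ of constant order). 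Convolving with a single uniform of width $1/(\sigma\sqrt n)$ produces a function whose $k$-th derivative is of order $n^{k/2}$, so every error term in the swapping argument blows up. The obstacle you flag at the end is therefore not a technical loose end but the reason the approach fails.

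The paper's route is genuinely different and is built precisely to avoid this: the gcd hypothesis is used (via B\'ezout, \cref{sec:GCD}) to extract a $Ber(1/2)$ component from each summand after finite grouping, so that a binomial number $L\approx np$ of summands carry a Bernoulli piece. The discreteness is then handled not by smoothing but by a local limit theorem for $Bin(l,1/2)$ (\cref{lem:binomial,lem:4}) compared cell-by-cell against the Gaussian integral over $(x-1/\sqrt l,x+1/\sqrt l)$; it is exactly here that the half-width $\tfrac{1}{2\sigma\sqrt n}$ of the constancy intervals of $h$ enters and yields $O(1/l)$ rather than $O(1/\sqrt l)$. The Stein-kernel lemma is applied only to the sum of Gaussian mixtures (term $III$), where every summand genuinely has a bounded kernel. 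If you want to salvage your plan, you would need to make a positive fraction of the summands continuous, which is essentially what the paper's Bernoulli-extraction step accomplishes.
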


Note that the restriction on the test functions $h$ in \cref{t2} (i.e., the constants $\pm 1/2$) is necessary. As in continuity correction, changing them to other constants will increase the error rate to $1/\sqrt{n}$.

Although we focus on two-term Edgeworth expansion for the i.i.d.\ case in dimension one, our approach may work for multivariate approximations, for even higher-order expansions,  and for some dependent cases. See \cref{rem1,rem2,rem3} for related discussions. 

\section{Proofs}

In this section, we prove \cref{t1,t2}. For the sake of logical flow, we leave some standard computations to \cref{sec:app}.

\subsection{Stein kernel}

For both proofs of \cref{t1,t2}, we need the concept of Stein kernel.
The term ``Stein kernel'' first appeared in \cite{LeNoPe15}, but the concept goes back to \cite{stein1986approximate}, \cite{chatterjee2009fluctuations} and \cite{nourdin2009stein}. 

\begin{definition}[Stein kernel, \cite{saumard2019weighted}]\label{def:contkernel}
Let $W$ be a continuous random variable with mean $\mu$, a connected support $(a,b)$, and positive density $p$ in $(a,b)$.
The function $\tau:\IR\to \IR$ is the Stein kernel for $W$ if 
\begin{equation} \label{eq:tau1}
    \E[(W-\mu)f(W)] = \E[\tau(W)f'(W)]
\end{equation}
for any differentiable function $f:\IR\to \IR$ such that the above expectations exist and $\tau p f\vert_a^b=0$.
\end{definition}

It is known that (cf. \cite{saumard2019weighted}) $\tau$ can be taken as
\ben{\label{eq:saumard}
\tau(x)=\frac{1}{p(x)}\int_x^\infty (y-\mu) p(y) dy,\quad x\in (a,b).
}
If, in addition, $\Var(W)=\sigma^2$, then by choosing $f(w)=w$ in \eq{eq:tau1}, we find that $\E \tau(W)=\sigma^2$. 

\begin{remark}\label{rem1}
The existence of a Stein kernel in multi-dimensions is more complicated (cf. \cite{CoFaPa19,Fa19,fathi2021higher}). \cite{CoFaPa19} proved that Stein kernel exists if a probability measure on $\IR^d$ satisfies a Poincar\'e inequality. Therefore, it may be possible to use our approach for multivariate Edgeworth expansions, assuming that the summand $X_i$ has a component uniformly distributed on a Euclidean ball in $\IR^d$.
\end{remark}

%To deal with the discrete case in Theorem \ref{t2}, we define a discrete version of Stein kernel as follows.

%\begin{definition}[Discrete Stein kernel]\label{def:disckernel}

%\end{definition}

\subsection{Proof of Theorem \ref{t1}}

In both proofs of \cref{t1,t2}, we will need the following Edgeworth expansion result for sums of i.i.d.\ random variables having Stein kernels.

\begin{lemma}[Stein kernel bound]\label{l1}
Let $n\geq 1$ and $X_1,\dots, X_n$ be i.i.d.\ with $\E X_1=0, \E X_1^2=1, \E X_1^3=\gamma, \E X_1^4<\infty$. Suppose $X_1$ has a Stein kernel $\tau_1$ such that $\E[\tau_1^2(X_1)]<\infty$. Let $W=\frac{1}{\sqrt{n}}\sum_{i=1}^n X_i$. Then, for all $h: \IR\to \IR$ with $\|h\|_\infty \leq 1$, we have
\ben{\label{eq:l1-0}
\Big|\E h(W)-\E h(Z)-\frac{\gamma}{6\sqrt{n}}\E [(Z^3-3Z) h(Z)]  \Big|\leq \frac{C}{n}\Big(\E [X_1^4]+\E[\tau_1^2(X_1)]\Big),
}
where $Z\sim N(0,1)$ and $C$ is a universal constant.
\end{lemma}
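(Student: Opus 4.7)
Let $f=f_h$ solve $f'(w)-wf(w)=h(w)-\E h(Z)$, with standard bounds $\|f\|_\infty,\|f'\|_\infty\le 2\|h\|_\infty$. A direct computation (substituting $h(Z)=f'(Z)-Zf(Z)+\E h(Z)$ into $\E[(Z^3-3Z)h(Z)]$ and applying $\E[Zg(Z)]=\E[g'(Z)]$ twice) gives
\[
\tfrac{\gamma}{6\sqrt n}\,\E[(Z^3-3Z)h(Z)]=-\tfrac{\gamma}{2\sqrt n}\,\E[f''(Z)],
\]
where $\E[f''(Z)]$ is interpreted as $\E[(Z^2-1)f(Z)]$ when $h$ is non-smooth. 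Hence the goal reduces to showing $\E h(W)-\E h(Z)=-\tfrac{\gamma}{2\sqrt n}\E[f''(Z)]+O(1/n)$.

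\textbf{Step 1 (Stein kernel pass on $W$).} By Stein's equation, $\E h(W)-\E h(Z)=\E[f'(W)-Wf(W)]$. Conditioning on $W^{(i)}:=W-X_i/\sqrt n$ (independent of $X_i$) and applying the Stein-kernel identity $\E[X_ig(X_i)]=\E[\tau_1(X_i)g'(X_i)]$ with $g(x)=f(W^{(i)}+x/\sqrt n)$ gives $\E[Wf(W)]=\tfrac{1}{n}\sum_i\E[\tau_1(X_i)f'(W)]$. Writing $\eta_i:=\tau_1(X_i)-1$ (mean zero, $L^2$ by $\E\tau_1^2(X_1)<\infty$) and using i.i.d.\ symmetry,
\[
\E h(W)-\E h(Z)=-\E[\eta_1 f'(W)].
\]

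\textbf{Step 2 (iterate the Stein equation).} Since $\E\eta_1=0$ and $\eta_1\perp W^{(1)}$, replace $f'(W)$ by $f'(W)-f'(W^{(1)})$. The Stein equation yields the exact decomposition
\[
f'(W)-f'(W^{(1)})=\tfrac{X_1}{\sqrt n}f(W)+W^{(1)}[f(W)-f(W^{(1)})]+[h(W)-h(W^{(1)})].
\]
A Taylor-type expansion of the first two (smooth) pieces together with the moment identity $\E[\eta_1X_1]=\gamma/2$ (from the Stein-kernel identity at $g(x)=x^2/2$) produces
\[
\E[\eta_1 f'(W)]=\tfrac{\gamma}{2\sqrt n}\,\E[(f+wf')(W^{(1)})]+\E[\eta_1(h(W)-h(W^{(1)}))]+O(1/n).
\]
The apparent $O(1/\sqrt n)$ remainder from the $W^{(1)}[f(W)-f(W^{(1)})]$ piece (where pointwise bounds on $f'(W^{(1)}+s)-f'(W^{(1)})$ are only $O(1)$) improves to $O(1/n)$ upon averaging against the smooth density $p^{(1)}$ of $W^{(1)}$ (a convolution of $n-1$ copies of $p_1$, which inherits regularity from the Stein-kernel hypothesis).

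\textbf{Step 3 (replacement argument, the main obstacle).} The remaining piece $\E[\eta_1(h(W)-h(W^{(1)}))]$ resists Taylor expansion because $h$ is only bounded. Conditioning on $W^{(1)}$, changing variables, and expanding the shifted density to first order via $p^{(1)}(\cdot-x/\sqrt n)-p^{(1)}(\cdot)\simeq -(x/\sqrt n)(p^{(1)})'$ gives
\[
\E[\eta_1(h(W)-h(W^{(1)}))]=-\tfrac{\gamma}{2\sqrt n}\!\int h(u)(p^{(1)})'(u)\,du+O(1/n),
\]
which equals $\tfrac{\gamma}{2\sqrt n}\E[h'(W^{(1)})]+O(1/n)$ in the distributional integration-by-parts sense. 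Combined with Step 2, the distributional identity $f+wf'+h'=f''$ reassembles everything into $\E h(W)-\E h(Z)=-\tfrac{\gamma}{2\sqrt n}\E[f''(W^{(1)})]+O(1/n)$. The final replacement $\E[f''(W^{(1)})]\to\E[f''(Z)]$ at accuracy $O(1/\sqrt n)$ is likewise achieved by substituting $f''=f+wf'+h'$ and moving all derivatives off $h$ onto $p^{(1)}-\phi$ via integration by parts; the required estimates $\int|(p^{(1)})'-\phi'|\,du=O(1/\sqrt n)$ follow from $\E X_1^4<\infty$ combined with $\E\tau_1^2(X_1)<\infty$. The principal difficulty throughout is keeping these $L^1$ regularity estimates on $p^{(1)}$ and its derivatives under rigorous control, which is where the full strength of the Stein-kernel hypothesis is used.
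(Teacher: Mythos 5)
Your Step 1 is exactly the paper's starting point: the Stein-kernel pass gives $\E h(W)-\E h(Z)=\E[(1-\tau)f'(W)]$ (your $-\E[\eta_1 f'(W)]$ by i.i.d.\ symmetry), and your reduction of the target correction term to $-\tfrac{\gamma}{2\sqrt n}\E[f''(Z)]$ is consistent with the paper's identity $\E g(Z)=\tfrac13\E[(Z^3-3Z)h(Z)]$. But Steps 2 and 3 contain a genuine gap. You repeatedly invoke regularity of the density $p^{(1)}$ of $W^{(1)}$ — that the error from $W^{(1)}[f(W)-f(W^{(1)})]$ ``improves to $O(1/n)$ upon averaging against the smooth density $p^{(1)}$,'' that $p^{(1)}(\cdot-x/\sqrt n)-p^{(1)}(\cdot)$ admits a first-order expansion with controlled remainder, and that $\int|(p^{(1)})'-\phi'|\,du=O(1/\sqrt n)$. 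None of this follows from the hypotheses $\E X_1^4<\infty$ and $\E[\tau_1^2(X_1)]<\infty$: the existence of an $L^2$ Stein kernel constrains how $p_1$ vanishes relative to its tail integral but says nothing about differentiability of $p_1$ or of its $(n-1)$-fold convolution, let alone uniform-in-$n$ bounds on $\|(p^{(1)})'\|_1$, $\|(p^{(1)})''\|_1$, or a $W^{1,1}$ local limit theorem with rate $n^{-1/2}$. Statements of that type are exactly what the classical Fourier-analytic proof establishes under Cram\'er-type conditions; assuming them here is assuming the hard part, and the whole point of the lemma is that it holds without them.

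The paper closes this gap by a device absent from your proposal: a \emph{second} Stein equation. Rather than Taylor-expanding $f'(W)-f'(W^{(1)})$ (which requires $f''$, i.e.\ effectively $h'$, or density regularity as a substitute), one solves $g'(w)-wg(w)=f'(w)-\E f'(Z)$, so that the problematic factor $f'(W)-\E f'(Z)$ in $\E[(1-\tau)(f'(W)-\E f'(Z))]$ is rewritten as $g'(W)-Wg(W)$ with $g$ and $g'$ bounded. A second Stein-kernel pass on $\E[(1-\tau)Wg(W)]$ then produces the main term $\tfrac{\gamma}{2\sqrt n}\E g(W)$ via $\E[X_1\tau_1(X_1)]=\gamma/2$ (the same moment identity you use) and a remainder $\E[(1-\tau)^2 g'(W)]=O(n^{-1})\E[\tau_1^2(X_1)]$, with no Taylor expansion of $f'$ or of any density ever required. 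If you want to complete your route, you must either prove the $W^{1,1}$ local-limit estimates you are using (which the stated hypotheses do not support) or replace Steps 2--3 with an argument, such as the second Stein equation, that avoids them.
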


\begin{proof}[Proof of Lemma \ref{l1}]
We use $C$ to denote universal constants in this proof. Its value may differ from line to line.
Let $f$ be the bounded solution to the Stein equation for $Z\sim N(0,1)$:
\ben{\label{eq:l1-1}
f'(w)-w f(w)=h(w)-\E h(Z).
}
It is known that (cf. \cite[Lemma 2.4]{ChGoSh11})
\ben{\label{eq:l1-13}
f(w)=e^{w^2/2} \int_{-\infty}^w (h(x)-\E h(Z)) e^{-x^2/2}dx
}
and
\ben{\label{eq:l1-2}
\|f\|_\infty \leq \sqrt{\pi/2}\|h-\E h(Z)\|_\infty,\quad \|f'\|_\infty \leq 2\|h-\E h(Z)\|_\infty.
}
Because $X_1$ has Stein kernel $\tau_1$, we have 
\be{
\E[Wf(W)]=\frac{1}{\sqrt{n}}\sum_{i=1}^n \E[X_i f(W)]=\frac{1}{n} \sum_{i=1}^n\E [ \tau_1(X_i) f'(W)].
}
This implies
\ben{\label{eq:l1-3}
\E h(W)-\E h(Z)=\E f'(W)-\E [Wf(W)]=\E [(1-\tau)f'(W)],
}
where
\be{
\tau:=\frac{1}{n}\sum_{i=1}^n\tau_1(X_i).
}
Therefore, using $\E\tau=\Var(W)=1$, we have
\ben{\label{eq:l1-8}
|\E h(W)-\E h(Z)|\leq \frac{2}{\sqrt{n}}\|h-\E h(Z)\|_\infty \sqrt{\E[\tau_1^2(X_i)]}. 
}
This bound is well known in the literature (cf. \cite[Lemma 5.3]{chatterjee2009fluctuations}). To obtain higher-order expansions, motivated by \cite[Eq.(33)]{kim2018edgeworth}, we make repeated use of the Stein equation as follows. 
Recall that $f$ is the bounded solution to \eq{eq:l1-1}.
Let $g$ be the bounded solution to the Stein equation
\ben{\label{eq:l1-20}
g'(w)-wg(w)=f'(w)-\E f'(Z).
}
From \eq{eq:l1-2} and the condition that $\|h\|_\infty\leq 1$, we have 
\be{
\|g\|_\infty \leq C,\quad \|g'\|_\infty \leq C.
}
%Assume without loss of generality that $W$ and $Z$ are independent.
Continuing from \eq{eq:l1-3} and using $\E \tau=1$ and \eq{eq:l1-20}, we have
\ben{\label{eq:l1-4}
\E h(W)-\E h(Z)=\E[(1-\tau)(f'(W)-\E f'(Z))]=\E[(1-\tau)(g'(W)-Wg(W))].
}
Denote $\tau^{(i)}:=\tau-\tau_1(X_i)/n$. We have
\besn{\label{eq:l1-5}
&\quad\E [(1-\tau)Wg(W)]\\
&=\frac{1}{\sqrt{n}} \sum_{i=1}^n \E [ (\frac{n-1}{n}-\tau^{(i)})X_i g(W)] + \frac{1}{\sqrt{n}} \sum_{i=1}^n \E [ (\frac{1}{n}-\frac{\tau_1(X_i)}{n})X_i g(W)]\\
&=\frac{1}{n} \sum_{i=1}^n \E [ (\frac{n-1}{n}-\tau^{(i)})\tau_1(X_i) g'(W)] + \frac{1}{\sqrt{n}} \sum_{i=1}^n \E [ (\frac{1}{n}-\frac{\tau_1(X_i)}{n})X_i g(W)]\\
&=\E [(1-\tau)\tau g'(W)]-\frac{1}{n} \sum_{i=1}^n \E [ (\frac{1}{n}-\frac{\tau_1(X_i)}{n})\tau_1(X_i) g'(W)] + \frac{1}{\sqrt{n}} \sum_{i=1}^n \E [ (\frac{1}{n}-\frac{\tau_1(X_i)}{n})X_i g(W)],
}
where we used the definition of Stein kernel in the second equation. 
From the boundedness of $g'$ and $\E \tau_1(X_i)=\Var(X_i)=1$, we have
\ben{\label{eq:l1-6}
\Big| \frac{1}{n} \sum_{i=1}^n \E [ (\frac{1}{n}-\frac{\tau_1(X_i)}{n})\tau_1(X_i) g'(W)] \Big|\leq \frac{C}{n}\E[\tau_1^2(X_1)].
}
From the definition of Stein kernel, we have $\tau_1\geq 0$, $\E[X_i \tau_1(X_i)]=\E[X_i^3]/2=\gamma/2$ and $\E[X_i^2 \tau_1(X_i)]=\E[X_i^4]/3$. Therefore, using independence, we have
\besn{\label{eq:l1-7}
&\Big| \frac{1}{\sqrt{n}} \sum_{i=1}^n \E [ (\frac{1}{n}-\frac{\tau_1(X_i)}{n})X_i g(W)]+\frac{\gamma}{2\sqrt{n}}\E g(W)\Big|\\
=& \Big|\frac{1}{\sqrt{n}} \sum_{i=1}^n \E [ \frac{X_i-X_i\tau_1(X_i)+\gamma/2}{n} (g(W)-g(W^{(i)}))] \Big|\\
\leq &\frac{C}{n} \E [X_1^4],
}
where $W^{(i)}:=W-X_i/\sqrt{n}$ and we used the boundedness of $g'$ and the Cauchy-Schwarz inequality in the last inequality. From \eq{eq:l1-4}--\eq{eq:l1-7}, we have
\ben{\label{eq:l1-9}
\Big|\E h(W)-\E h(Z)- \frac{\gamma}{2\sqrt{n}} \E g(W)-\E[(1-\tau)^2 g'(W)]\Big|\leq \frac{C}{n} \Big(\E [X_1^4]+\E[\tau_1^2(X_1)]\Big).
}
Using the boundedness of $g$, \eq{eq:l1-8}, $ab\leq (a^2+b^2)/2$ and the Cauchy-Schwarz inequality, we have
\ben{\label{eq:l1-10}
\Big|\frac{\gamma}{2\sqrt{n}} \E [g(W)-g(Z)] \Big|\leq \frac{C}{n} \Big(\E [X_1^4]+\E[\tau_1^2(X_1)]\Big).
}
Using the boundedness of $g'$ and $\E \tau=1$, we have
\ben{\label{eq:l1-11}
\Big| \E[(1-\tau)^2 g'(W)]\Big|\leq \frac{C}{n} \E[\tau_1^2(X_1)].
}
Finally, \eq{eq:l1-0} follows from \eq{eq:l1-9}--\eq{eq:l1-11} and 
\ben{\label{eq:l1-12}
\E g(Z)=\frac{1}{3}\E [(Z^3-3Z) h(Z)].
}
The last equation is proved in \cref{sec:3.1} by a standard computation.
\end{proof}

\begin{remark}\label{rem2}
As indicated by \cite[Eq.(37)]{kim2018edgeworth}, further repetitions of arguments in the above proof can be made to obtain even higher-order expansions. 
We do not pursue these in this paper as they unavoidably require tedious notation.
\end{remark}

To prove Theorem \ref{t1}, we divide $X_1$ into two components. We use Lemma \ref{l1} to deal with the continuous component. It then becomes a problem of asymptotic expansion of the expectation of a smooth test function of the remaining components. Finally, we use \cref{l1} again to approximate a sum of Gaussian mixtures by a normal distribution.

\begin{proof}[Proof of Theorem \ref{t1}]
In this proof, we use $C$ to denote positive constants that depend only on the distribution of $X_1$ and may differ from line to line.
We use $O(1)$ to denote a quantity that is bounded by $C$ in absolute value.
Without loss of generality, we assume that with probability $p>0$, $X_1=U_1$ and
\ben{\label{eq:t1-1}
\E U_1=0, \  U_1\  \text{is continuous, compactly supported and has density bounded away from 0}.
}
This centering can be achieved by a finite convolution that does not affect the error rate (see details in \cref{sec:3.2}). 
We remark that assuming $\E U_1=0$ is only for convenience, as otherwise the term $III$ below would result in an additional non-negligible term.
Let $\tilde U_1$ be the other component of $X_1$, which may be discrete or even singular. That is, 
\begin{equation*}
X_1=
\begin{cases}
U_1,& \text{with probability}\  p,\\
\tilde U_1, & \text{with probability}\  1-p.
\end{cases}
\end{equation*}
Suppose
\be{
\E U_1=0,\  \E U_1^2=\sigma_1^2,\  \E U_1^3=\gamma_1,\E \tilde U_1=0, \ \E \tilde U_1^2=\sigma_2^2,\ \E\tilde U_1^3=\gamma_2.
}
Assume $\sigma_2^2>0$; otherwise, \cref{t1} follows directly from \cref{l1}.
(Although we do not do it here, an inspection of \cref{sec:3.2} shows that we can further assume $\E U_1^3=0$. This would slightly simplify the following proof by making $\gamma_1=0$.)
Note that
\be{
\E X_1=0,\ \E X_1^2=1=p\sigma_1^2+(1-p)\sigma_2^2,\ \E X_1^3=\gamma=p \gamma_1^3+(1-p) \gamma_2^3.
}
Let 
\begin{equation}\label{eq:t1-2}
Y_1=
\begin{cases}
V_1\sim N(0, \sigma_1^2),& \text{with probability}\  p,\\
\tilde V_1\sim N(0, \sigma_2^2), & \text{with probability}\  1-p.
\end{cases}
\end{equation}
Let $U_1, \dots, U_n, \tilde U_1, \dots, \tilde U_n, V_1,\dots, V_n, \tilde V_1,\dots, \tilde V_n$ be jointly independent, where $U_1, \dots, U_n$ are identically distributed (the same applies for $\tilde U$'s, $V$'s and $\tilde V$'s, respectively).
Let $Z, \tilde Z\sim N(0,1)$ be independent standard normal variables and independent of everything else.

Let $L\sim Bin(n,p)$ be a binomial random variable independent of everything else. Then, $W$ has the same distribution as $(U_1+\dots+U_L+\tilde U_{L+1}+\dots +\tilde U_{n})/\sqrt{n}$.
We write $\E h(W)-\E h(Z)$ into three terms as follows:
\besn{\label{eq:t1-5}
&\E h(W)-\E h(Z)\\
=& \E h\Big(\frac{U_1+\dots+U_L+\tilde U_{L+1}+\dots + \tilde U_{n}}{\sqrt{n}}\Big)-\E h\Big(\frac{V_1+\dots+V_L+\tilde U_{L+1}+\dots + \tilde U_{n}}{\sqrt{n}}\Big)\\
&+\E h\Big(\frac{V_1+\dots+V_L+\tilde U_{L+1}+\dots +\tilde U_{n}}{\sqrt{n}}\Big)-\E h\Big(\frac{V_1+\dots+V_L+\tilde V_{L+1}+\dots + \tilde V_{n}}{\sqrt{n}}\Big)\\
&+\E h\Big(\frac{V_1+\dots+V_L+\tilde V_{L+1}+\dots + \tilde V_{n}}{\sqrt{n}}\Big)-\E h(Z)\\
=&I+II+III.
}
Note that $(V_1+\dots+V_L+\tilde V_{L+1}+\dots +\tilde V_{n})/\sqrt{n}$ has the same distribution as $(Y_1+\dots+Y_n)/\sqrt{n}$, where $Y_1,\dots, Y_n$ are i.i.d.\ with distribution \eq{eq:t1-2}.
From the Stein kernel bound \eq{eq:l1-0},
$\E Y_1^3=0$, $\E Y_1^4<\infty$ and the boundedness of its Stein kernel (this can be easily checked from the expression \eq{eq:saumard}, see \cref{lem:mixture}), we have
\ben{\label{eq:t1-4}
|III|\leq \frac{C}{n}.
}
Given $L=l$ such that $|l-np|\leq np/2$, we have, by conditioning on the  $\tilde{U}$'s and using the Stein kernel bound \eq{eq:l1-0} and the condition \eq{eq:t1-1},
\besn{\label{eq:t1-3}
&\E h\Big(\frac{U_1+\dots+U_l+\tilde U_{l+1}+\dots + \tilde  U_{n}}{\sqrt{n}}\Big)-\E h\Big(\frac{V_1+\dots+V_l+\tilde U_{l+1}+\dots + \tilde U_{n}}{\sqrt{n}}\Big)\\
=& \E h\Big(\frac{U_1+\dots+U_l}{\sigma_1\sqrt{l}}\cdot\frac{\sigma_1\sqrt{l}}{\sqrt{n}} +\frac{\tilde U_{l+1}+\dots+ \tilde U_{n}}{\sqrt{n}}\Big)-\E h\Big(Z\cdot\frac{\sigma_1\sqrt{l}}{\sqrt{n}}+\frac{\tilde U_{l+1}+\dots + \tilde U_{n}}{\sqrt{n}}\Big)   \\
=& \frac{\gamma_1}{6\sigma_1^3\sqrt{l}} \E\Big[ (Z^3-3Z) h(Z\cdot \frac{\sigma_1\sqrt{l}}{\sqrt{n}}+\frac{\tilde U_{l+1}+\dots +\tilde U_{n}}{\sqrt{n}}) \Big]+O(\frac{1}{n}).
}
By regarding the last expectation as an expectation of a smooth test function (because of convolution with normal) of the $\tilde U$'s, we have (see \cref{sec:3.3})
\ben{\label{eq:t1-7}
\eq{eq:t1-3}=\frac{\gamma_1}{6\sigma_1^3\sqrt{l}} \E\Big[ (Z^3-3Z) h(Z\cdot \frac{\sigma_1\sqrt{l}}{\sqrt{n}}+\tilde Z\cdot \frac{\sigma_2\sqrt{n-l}}{\sqrt{n}}) \Big]+O(\frac{1}{n}).
}
Because $L\sim Bin(n,p)$, the event $\{|L-np|> np/2\}$
has probability $O(1/n)$.
Therefore, 
\be{
I=\frac{1}{6\sqrt{n}} \E\Big[ \frac{\gamma_1 \sqrt{n}}{\sigma_2^3\sqrt{L} }(Z^3-3Z) h\big(Z\cdot \frac{\sigma_1\sqrt{L}}{\sqrt{n}}+\tilde Z\cdot \frac{\sigma_2\sqrt{n-L}}{\sqrt{n}}\big) 1_{\{|L-np|\leq np/2\}} \Big]+O(\frac{1}{n}).
}
By a similar reasoning (conditioning on $L$ and using the smooth function expansion for the $\tilde U$'s by applying either \eq{eq:intro-1} or a Lindeberg swapping argument), we have
\be{
II=\frac{1}{6\sqrt{n}} \E\Big[ \frac{\gamma_2 \sqrt{n}}{\sigma_1^3\sqrt{n-L} }(\tilde Z^3-3\tilde Z) h\big(Z\cdot \frac{\sigma_1\sqrt{L}}{\sqrt{n}}+\tilde Z\cdot \frac{\sigma_2\sqrt{n-L}}{\sqrt{n}}\big) 1_{\{|L-np|\leq np/2\}} \Big]+O(\frac{1}{n}).
}
Denote
\be{
\sigma_L^2:=\frac{L \sigma_1^2 + (n-L) \sigma_2^2}{n},\quad \gamma_L:= \frac{L\gamma_1+(n-L) \gamma_2}{n}.
}
Using Gaussian integration by parts and then combining the two independent Gaussian variables (and by approximating $h$ using arbitrarily close smooth functions in the intermediate step), we have
\besn{\label{eq:t1-60}
I+II=&-\frac{1}{6\sqrt{n}} \E\Big[\gamma_L h'''(\sigma_L Z)1_{\{|L-np|\leq np/2\}}\Big]+O(\frac{1}{n})\\
=& \frac{1}{6\sqrt{n}} \E \Big[ \frac{\gamma_L}{\sigma_L^3} (Z^3-3Z) h(\sigma_L Z)1_{\{|L-np|\leq np/2\}} \Big] +O(\frac{1}{n})\\
=& \frac{1}{6\sqrt{n}} \E \Big[ \frac{\gamma_L}{\sigma_L^3}h_{2}(\sigma_{L}) \Big] +O(\frac{1}{n}),
%\\ =&\frac{\gamma}{6\sqrt{n}} \E \Big[  (Z^3-3Z) h(\sigma_L Z) \Big] +O(\frac{1}{n})\\
% =&\frac{\gamma}{6\sqrt{n}} \E \Big[  (Z^3-3Z) h( Z) \Big] +O(\frac{1}{n}),
}
where $h_{2}(x)=\E[(Z^3-3Z) h(x Z)]$ and we removed the indicator in the last equation because the event $\{|L-np|\leq np/2\}$ occurs with overwhelming probability. Using Gaussian integration by parts, we have
$\vert h_{2}(x) \vert \leq C$ and 
$\lvert h_{2}'(x) \rvert \leq \frac{C}{x}$ for $x>0$ (cf. \cref{eq:lem5}). 
%(See Appendix for more details.)
According to the definition of $\sigma_{L}^{2}$, we have $\sigma_{L}^{2}\geq \min
\{\sigma_{1}^{2},\sigma_{2}^{2}\}>0$. Moreover, 
$$ \sigma_{L}^2-1=(\sigma_{1}^{2}-\sigma_{2}^{2})\frac{L-np}{n},$$
$$ \gamma_{L}-
\gamma=(\gamma_{1}-\gamma_{2})\frac{L-np}{n}.$$
By applying Taylor's expansion to $\gamma_L h_{2}(\sigma_{L})/\sigma_{L}^{3}$ with respect to
$\sigma_{L}$ ($\gamma_L$ resp.) at point $1$ ($\gamma$ resp.) and then taking expectation with respect to $L$, we obtain
\besn{\label{eq:t1-6}
 I+II
 %&\frac{1}{6\sqrt{n}} \E\Big[\gamma_L h'''(\sigma_L Z)1_{\{|L-np|\leq np/2\}}\Big]+O(\frac{1}{n})\\
% =& \frac{1}{6\sqrt{n}} \E \Big[ \frac{\gamma_L}{\sigma_L^3} (Z^3-3Z) h(\sigma_L Z)1_{\{|L-np|\leq np/2\}} \Big] +O(\frac{1}{n})\\
% =& \frac{1}{6\sqrt{n}} \E \Big[ \frac{\gamma_L}{\sigma_L^3} (Z^3-3Z) h(\sigma_L Z) \Big] +O(\frac{1}{n})\\
% =&\frac{\gamma}{6\sqrt{n}} \E \Big[  (Z^3-3Z) h(\sigma_L Z) \Big] +O(\frac{1}{n})\\
=\frac{\gamma}{6\sqrt{n}} \E \Big[  (Z^3-3Z) h( Z) \Big] +O(\frac{1}{n}).
}
Combining \eq{eq:t1-5}, \eq{eq:t1-4} and \eq{eq:t1-6}, we obtain \eq{eq:t1-0}.
\end{proof}

\begin{remark}\label{rem3}
Our approach may also work for some dependent cases. For example, following the proof of \cref{l1}, we may obtain an Edgeworth expansion result in normal approximation of multilinear forms of independent random variables having Stein kernels. Then, combining with the replacement argument in the proof of \cref{t1}, we may deal with multilinear forms of independent random variables having continuous components.
\end{remark}

\subsection{Proof of Theorem \ref{t2}}

%\begin{lemma}[Discrete Stein kernel bound]\label{l2}

%\end{lemma}

%\begin{proof}[Proof of Lemma \ref{l2}]

%\end{proof}

In this proof, we use $C$ to denote positive constants that depend only on the distribution of $X_1$ and may differ from line to line.
We use $O(1)$ to denote a quantity that is bounded by $C$ in absolute value.
We divide the proof into four steps following the approach used in the proof of \cref{t1}. Note that although we deal with a discrete case here, we still need the Stein kernel bound in \cref{l1} for approximating a sum of Gaussian mixtures.
%One technical difference is that instead of using \cref{l1}, we use a direct computation to approximate a binomial distribution by normal.

\medskip

\textbf{Step 1.}
As in the proof of \cref{t1}, we assume without loss of generality that $X_1$ has a Bernoulli $Ber(1/2)$ component. This is possible for $X_1+\dots+X_m-z$ with a positive integer $m$ and an integer $z$ because of the assumption on the support of $X_1$ (see \cref{sec:GCD}). Such finite grouping does not affect the error rate following similar arguments as \cref{eq:1007}--\cref{eq:10010} in \cref{sec:3.2}. Under this assumption, we have
\begin{equation*}
X_1=
\begin{cases}
U_1\sim Ber(1/2),& \text{with probability}\  p,\\
\tilde U_1\in\IZ, & \text{with probability}\  1-p,
\end{cases}
\end{equation*}
and
\be{
\E U_1=\frac{1}{2},\  \Var (U_1)=\frac{1}{4},\  \E (U_1-\frac{1}{2})^3=0,
}
\be{
\E \tilde U_1=:\mu_2, \ \Var (\tilde U_1)=:\sigma_2^2,\ \E (\tilde U_1-\mu_2)^3=:\gamma_2.
}
\begin{equation}
    \E (X_{1}-\mu)^{3}= p\left(\frac{3}{4}
    (\frac{1}{2}-\mu)+(\frac{1}{2}-\mu)^3\right)
    +(1-p)(\gamma_{2}+3\sigma_2^{2}(\mu_2-\mu)+(\mu_2-\mu)^3). 
    \label{eq:10042}
\end{equation}
Note that if $\sigma_2^2=0$, then \cref{t2} follows directly from \cref{lem:binomial} below. Therefore, we assume that $\sigma_2^2>0$ in the following.
We remark that, unlike \eq{eq:t1-1}, it appears impossible to center $U_1$ in the discrete case. This makes the current proof slightly more technical than that of \cref{t1}.
Let 
\begin{equation}\label{eq:t2-1}
Y_1=
\begin{cases}
V_1\sim N(\frac{1}{2}, \frac{1}{4}),& \text{with probability}\  p,\\
\tilde V_1\sim N(\mu_2, \sigma_2^2), & \text{with probability}\  1-p.
\end{cases}
\end{equation}
Let $U_1, \dots, U_n, \tilde U_1, \dots, \tilde U_n, V_1,\dots, V_n, \tilde V_1,\dots, \tilde V_n$ be jointly independent, where $U_1, \dots, U_n$ are identically distributed (the same applies for $\tilde U$'s, $V$'s and $\tilde V$'s, respectively).
Let $Z, \tilde Z\sim N(0,1)$ be independent standard normal variables and independent of everything else.

Let $L\sim Bin(n,p)$ be independent of everything else. Then $W$ has the same distribution as $(U_1+\dots+U_L+\tilde U_{L+1}+\dots +\tilde U_{n}-n\mu)/(\sigma\sqrt{n})$.
We write $\E h(W)-\E h(Z)$ into three terms as follows:
\besn{\label{eq:t2-2}
&\E h(W)-\E h(Z)\\
=& \E h\Big(\frac{U_1+\dots+U_L+\tilde U_{L+1}+\dots + \tilde U_{n}-n\mu}{\sigma\sqrt{n}}\Big)-\E h\Big(\frac{V_1+\dots+V_L+\tilde U_{L+1}+\dots + \tilde U_{n}-n\mu}{\sigma\sqrt{n}}\Big)\\
&+\E h\Big(\frac{V_1+\dots+V_L+\tilde U_{L+1}+\dots +\tilde U_{n}-n\mu}{\sigma\sqrt{n}}\Big)-\E h\Big(\frac{V_1+\dots+V_L+\tilde V_{L+1}+\dots + \tilde V_{n}-n\mu}{\sigma\sqrt{n}}\Big)\\
&+\E h\Big(\frac{V_1+\dots+V_L+\tilde V_{L+1}+\dots + \tilde V_{n}-n\mu}{\sigma\sqrt{n}}\Big)-\E h(Z)\\
=&I+II+III.
}
Note that $(V_1+\dots+V_L+\tilde V_{L+1}+\dots +\tilde V_{n}-n\mu)/(\sigma\sqrt{n})$ has the same distribution as $(Y_1+\dots+Y_n-n\mu)/(\sigma\sqrt{n})$, where $Y_1,\dots, Y_n$ are i.i.d.\ with distribution \eq{eq:t2-1}.
From the Stein kernel bound \eq{eq:l1-0}, $\E Y_1^4<\infty$ and the boundedness of its Stein kernel (see \cref{lem:mixture}),
%(this can be easily checked from the expression \eq{eq:saumard}) 
we have
\ben{\label{eq:10040}{{~~}}
III= \frac{\E(Y_1-\mu)^3}{6 \sqrt{n} \sigma^3} \E [(Z^3-3Z)h(Z)]+O(\frac{1}{n}).
}
It is straightward to compute that
\ben{\label{eq:t2-5}
\E (Y_1-\mu)^3= p\left(\frac{3}{4} (\frac{1}{2}-\mu)+(\frac{1}{2}-\mu)^3\right) +(1-p)(3\sigma_2^2(\mu_2-\mu)+(\mu_2-\mu)^3).
}
\medskip

\textbf{Step 2.} To deal with $I$, we use the following lemma, which is proved in \cref{sec:3.4}.

\begin{lemma}\label{lem:binomial}
Let $l\geq 1$, $S\sim Bin(l,1/2)$ and $Z\sim N(0,1)$. Let $h:\IR\to \IR$ be such that $|h(x)|\leq 1$ and $h(x)$ equals a constant in $(z-1/2, z+1/2)$ for any $z\in \IZ$. Then we have
\be{
\E h(S)-\E h\left(\frac{l}{2}+Z\sqrt{\frac{l}{4}}\right)=O\left(\frac{1}{l}\right).
}
\end{lemma}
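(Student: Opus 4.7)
The strategy is to reduce the lemma to a continuity-corrected local limit bound for $\mathrm{Bin}(l,1/2)$. By the assumption on $h$ we can write
$$\E h(S)=\sum_{k\in\IZ}h(k)\binom{l}{k}2^{-l},\qquad \E h\bklr{l/2+Z\sqrt{l/4}}=\sum_{k\in\IZ}h(k)\int_{k-1/2}^{k+1/2}\phi_{l/2,l/4}(x)\,dx,$$
with the convention $\binom{l}{k}=0$ outside $\{0,\ldots,l\}$ and $\phi_{\mu,\sigma^2}$ denoting the $N(\mu,\sigma^2)$ density. Since $\|h\|_\infty\leq 1$, it suffices to prove the total-variation–type estimate
$$\Delta_l:=\sum_{k\in\IZ}\Bigl|\binom{l}{k}2^{-l}-\int_{k-1/2}^{k+1/2}\phi_{l/2,l/4}(x)\,dx\Bigr|=O(1/l).$$

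The main tool is a sharp local expansion of both quantities around the Gaussian density. I would first truncate at $|k-l/2|\leq M\sqrt{l\log l}$ for a large constant $M$; outside this window both terms are super-polynomially small in $l$ by Hoeffding's inequality for the binomial and standard Gaussian tail bounds, contributing $o(1/l)$. In the central window, setting $x_k=(k-l/2)/\sqrt{l/4}$, Stirling's formula with enough retained terms yields
$$\binom{l}{k}2^{-l}=\frac{1}{\sqrt{l/4}}\phi(x_k)\bbklr{1+\frac{r(x_k)}{l}+O\bbklr{\frac{1+x_k^6}{l^2}}},$$
where $\phi$ is the standard normal density and $r$ is an \emph{even} polynomial of degree at most four; the crucial point is that the would-be $l^{-1/2}$ Edgeworth correction is absent because it is proportional to the third central moment of $\mathrm{Ber}(1/2)$, which vanishes. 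A Taylor expansion of $\int_{k-1/2}^{k+1/2}\phi_{l/2,l/4}(x)\,dx$ about $x=k$ produces an expression of the same form with $r$ replaced by another even polynomial $\tilde r$ (odd powers disappear by symmetry of integration over $(-\tfrac12,\tfrac12)$).

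Subtracting the two expansions gives a per-term discrepancy of order $\phi(x_k)(1+x_k^4)/l^{3/2}$, and since the spacing of consecutive $x_k$'s is $2/\sqrt{l}$, a Riemann-sum comparison bounds $\sum_k\phi(x_k)(1+x_k^4)$ by $O(\sqrt{l})$. Consequently $\Delta_l=O(\sqrt{l}\cdot l^{-3/2})=O(1/l)$, as required. The main technical obstacle is the bookkeeping in the Stirling expansion: one has to carry enough terms and exploit even/odd parity to verify that no uncancelled odd-power corrections survive at order $l^{-1/2}$. Once this symmetry-driven cancellation is in place the summation step is routine.
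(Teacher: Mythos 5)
Your proposal is correct and follows essentially the same route as the paper: both reduce the statement, via the constancy of $h$ on the unit intervals around integers, to a local limit comparison between $\binom{l}{k}2^{-l}$ and $\int_{k-1/2}^{k+1/2}\phi_{l/2,l/4}(x)\,dx$, prove that comparison with Stirling's formula in a central window (exploiting the parity/vanishing-third-moment cancellation of the $l^{-1/2}$ term, exactly as in the paper's Lemma 3.4), and dispose of the tails with binomial and Gaussian tail bounds before summing the $O(\phi(x_k)(1+x_k^4)/l^{3/2})$ per-term errors. The only cosmetic difference is the truncation radius ($M\sqrt{l\log l}$ versus the paper's $l^{1/4}/8$), which does not affect the argument.
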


Given $L=l$ such that $|l-np|\leq np/2$, we have, by conditioning on the $\tilde U$'s and using \cref{lem:binomial} and the condition on $h$ in \cref{t2},
\bes{
&\E h\Big(\frac{U_1+\dots+U_l+\tilde U_{l+1}+\dots + \tilde  U_{n}-n\mu}{\sigma\sqrt{n}}\Big)-\E h\Big(\frac{V_1+\dots+V_l+\tilde U_{l+1}+\dots + \tilde U_{n}-n\mu}{\sigma\sqrt{n}}\Big)\\
=& O(\frac{1}{n}).
}
Because $L\sim Bin(n,p)$, the event $\{|L-np|> np/2\}$
has probability $O(1/n)$.
Therefore, 
\ben{\label{eq:t2-3}
I=O(\frac{1}{n}).
}

\medskip

\textbf{Step 3.} 
By a similar reasoning as in estimating $II$ in the proof of \cref{t1} (conditioning on $L$ and using the smooth function expansion for the $\tilde U$'s by either \eq{eq:intro-1} or a Lindeberg swapping argument), we have
\ben{\label{eq:10031}
    II=\frac{\gamma_{2} }{6 \sigma_{2}^{3}}
    \E\Big[\frac{1}{\sqrt{n-L}}
    (\tilde{Z}^{3}-3\tilde{Z})h\left(\frac{\sqrt{L}}{2 \sigma
            \sqrt{n}}Z+\frac{\sigma_{2}\sqrt{n-L}}{\sigma \sqrt{n}}\tilde{Z}+\mu_{L}\right)
            1_{ \{|L-np|\leq np/2\}}\Big]+O(\frac{1}{n}),
}
where 
\ben{\label{eq:t2-6}
\mu_{L}=\frac{L/2-\mu L}{\sigma\sqrt{n}}+\frac{(n-L)\mu_{2}-(n-L)\mu}{\sigma \sqrt{n}}.
}
Given $L=l$ such that $|l-np|\leq np/2$, let $$h_{1}(x)=\E\Big[
     (\tilde{Z}^{3}-3\tilde{Z})h\left(x+\frac{\sigma_{2}\sqrt{n-l}}{\sigma
         \sqrt{n}}\tilde{Z}+\mu_{l}\right)\Big],$$
and 
$$ h_{2}(r)= \E h_{1}(r Z).$$
         Because $ \lVert h_{1} \rVert_{\infty}\leq C$, from \eq{eq:lem5} and Taylor's expansion, we have 
         \begin{equation}
             \begin{aligned}
                 h_{2}\left(\frac{\sqrt{l}}{2 \sigma
                     \sqrt{n}}\right)=h_{2}\left( \frac{\sqrt{p}}{2\sigma} \right) +
                     O(1) \frac{1}{\sqrt{p}}\frac{l-np}{n}.
             \end{aligned}
             \label{eq:10032}
         \end{equation}
         Plugging \cref{eq:10032} into \cref{eq:10031}, taking expectation with respect to $L$, and using $\E|L-np|=O(\sqrt{n})$, we obtain 
         \begin{equation}
             \begin{aligned}
                II= \frac{\gamma_2 }{6 \sigma_{2}^{3}}
    \E\Big[\frac{1}{\sqrt{n-L}}
    (\tilde{Z}^{3}-3\tilde{Z})h\left(\frac{\sqrt{p}}{2 \sigma
            }Z+\frac{\sigma_{2}\sqrt{n-L}}{\sigma \sqrt{n}}\tilde{Z}+\mu_{L}\right)
            1_{ \{|L-np|\leq np/2\}}\Big]+O(\frac{1}{n}).
             \end{aligned}
             \label{eq:10033}
         \end{equation}
         
         Let $h_{3}(x)=\E h(\frac{\sqrt{p}}{2\sigma}Z+x)$. By the Gaussian integration by parts formula, we have
         \begin{equation}
             \begin{aligned}
                 II= -\frac{\gamma_2 }{6 \sqrt{n} \sigma^{3}}
                 \E\Big[\frac{n-L}{n}
                 h_{3}'''\left(\frac{\sigma_{2}\sqrt{n-L}}{\sigma \sqrt{n}}\tilde{Z}+\mu_{L}\right)
            1_{ \{|L-np|\leq np/2\}}\Big]+O(\frac{1}{n}).
             \end{aligned}
             \label{eq:10034}
         \end{equation}
         Similar to \cref{eq:10033}, we obtain
         \begin{equation}
             \begin{aligned}
                 II=& -\frac{\gamma_2 }{6 \sqrt{n} \sigma^{3}}
                 \E\Big[\frac{n-L}{n}
                 h_{3}'''\left(\frac{\sigma_{2}\sqrt{n-np}}{\sigma \sqrt{n}}\tilde{Z}+\mu_{L}\right)
            1_{ \{|L-np|\leq np/2\}}\Big]+O(\frac{1}{n})
               \\=& -\frac{\gamma_2 }{6 \sqrt{n} \sigma^{3}}
                 \E\Big[\frac{n-L}{n}
                 h_{3}'''\left(\frac{\sigma_{2}\sqrt{n-np}}{\sigma \sqrt{n}}\tilde{Z}+\mu_{L}\right)\Big]
           +O(\frac{1}{n})
               \\=& -\frac{\gamma_2(1-p) }{6 \sqrt{n} \sigma^{3}}
                 \E\Big[h_{3}'''\left(\frac{\sigma_{2}\sqrt{n-np}}{\sigma
                 \sqrt{n}}\tilde{Z}+\mu_{L}\right)\Big]\\
                  &-\frac{\gamma_2 }{6 \sqrt{n} \sigma^{3}}
                  \E\Big[\frac{np-L}{n}
                 h_{3}'''\left(\frac{\sigma_{2}\sqrt{n-np}}{\sigma
                 \sqrt{n}}\tilde{Z}+\mu_{L}\right)\Big]
           +O(\frac{1}{n})
\\=& -\frac{\gamma_2(1-p) }{6 \sqrt{n} \sigma^{3}}
                 \E
                 \Big[ h_{3}'''\left(\frac{\sigma_{2}\sqrt{n-np}}{\sigma
                 \sqrt{n}}\tilde{Z}+\mu_{L}\right)\Big]
           +O(\frac{1}{n}),
             \end{aligned}
             \label{eq:10035}
         \end{equation}
        where we use the fact that the event $\{|L-np|> np/2\}$
        has probability $O(1/n)$ to drop the indicator in the second equality, and we used the boundedness of $h_3'''$ (cf. \eq{eq:lem1}) and the fact that $\E|L-np|=O(\sqrt{n})$ to drop the second term in the last equation.

        From \eq{eq:t2-6} and $\mu=p/2+(1-p)\mu_2$, we have $$\mu_{L}= \left( \frac{1}{2}-\mu_{2}
        \right)\frac{L-np}{\sigma \sqrt{n}}=\left( \frac{1}{2}-\mu_{2}
    \right)\frac{\sqrt{p(1-p)}}{\sigma}\frac{L-np}{ \sqrt{np(1-p)}}.$$ 
    Note that $\mu_{L}$ converges in distribution to $N(0,\left( {1}/{2}-\mu_{2}
    \right)^2{p(1-p)}/{\sigma^2})$ as $n\to \infty$.
    Since $h_{3}$ has bounded derivatives (cf. \eq{eq:lem1}), we have, by Lindeberg's swapping argument, 
    \begin{equation}
        \begin{aligned}
            II=-\frac{\gamma_2(1-p) }{6 \sqrt{n} \sigma^{3}}
                 \E
                 h_{3}'''\left(\frac{\sigma_{2}\sqrt{n-np}}{\sigma
                 \sqrt{n}}\tilde{Z}+\left( \frac{1}{2}-\mu_{2}
    \right)\frac{\sqrt{p(1-p)}}{\sigma} \hat{Z}\right)
           +O(\frac{1}{n}),
        \end{aligned}
        \label{eq:10036}
    \end{equation}
    where $\hat{Z}\sim N(0,1)$ and $\hat Z$ is independent of everything else.
    Because $Z,\tilde{Z},\hat{Z}$ are i.i.d.\  $N(0,1)$, by a straightforward computation, we have 
    \begin{equation*}
        \begin{aligned}
            \Var\left(\frac{\sqrt{p}}{2\sigma}Z+\frac{\sigma_{2}\sqrt{n-np}}{\sigma
                 \sqrt{n}}\tilde{Z}+\left( \frac{1}{2}-\mu_{2}
             \right)\frac{\sqrt{p(1-p)}}{\sigma} \hat{Z}\right)=1.
        \end{aligned}
        %\label{eq:10039}
    \end{equation*}
    Combining the three independent Gaussian variables and using Gaussian integration by parts (and approximating $h$ by
    arbitrarily close smooth functions in the intermediate step), we have
    \begin{equation}
        \begin{aligned}
            II=&-\frac{\gamma_2(1-p) }{6 \sqrt{n} \sigma^{3}}
                 \E
                 h'''\left(\frac{\sqrt{p}}{2\sigma}Z+\frac{\sigma_{2}\sqrt{n-np}}{\sigma
                 \sqrt{n}}\tilde{Z}+\left( \frac{1}{2}-\mu_{2}
    \right)\frac{\sqrt{p(1-p)}}{\sigma} \hat{Z}\right)
           +O(\frac{1}{n})
            \\=&-\frac{\gamma_2(1-p) }{6 \sqrt{n} \sigma^{3}}
                 \E h'''\left(Z\right)
           +O(\frac{1}{n})
\\=&\frac{\gamma_2(1-p) }{6 \sqrt{n} \sigma^{3}}
\E[(Z^{3}-3Z) h\left(Z\right)]
           +O(\frac{1}{n}).
        \end{aligned}
        \label{eq:10038}
    \end{equation}

\medskip

\textbf{Step 4.} 
% Denote
% \be{
%     \sigma_L^2:= \frac{L}{4 \sigma^{2} n}+\frac{\sigma_{2}^{2} (n-L)}{ \sigma^{2} n},\quad \gamma_L:= ??.
% }
% Using Gaussian integration by parts (and approximating $h$ by arbitrarily close smooth functions in the intermediate step)
% \bes{
% II+III=& ?? +O(\frac{1}{n}),
% }
% where ??, we obtain
From \eq{eq:10042} and \eq{eq:t2-5}, we have
\begin{equation}
    \E(Y_{1}-\mu)^3+ (1-p)\gamma_{2}=\E (X_{1}-\mu)^{3}=\sigma^{3} \gamma.
    \label{eq:10041}
\end{equation}
From \eq{eq:10038}, \eq{eq:10040} and \eq{eq:10041}, we have
%Thus, by \cref{eq:10030,eq:10039,eq:10041},
\besn{\label{eq:t2-4}
 II+III
=\frac{\gamma}{6\sqrt{n}} \E \Big[  (Z^3-3Z) h( Z) \Big] +O(\frac{1}{n}).
}
Combining \cref{eq:t2-2,eq:t2-3,eq:t2-4}, we obtain \eq{eq:t2-0}.

% <<<<<<< HEAD
% \begin{lemma}\label{lem3}
% If $S\sim Bin(n,1/2)$, $Z\sim N(n/2,n/4)$. $h$ is such that $|h(x)|\leq 1$ and $h(x)$ equals a constant in $(z-1/2, z+1/2)$ for any $z\in \IZ$, then
% \be{
% \E h(S)-\E h(Z)=O(1/n).
% }
% \end{lemma}

% \medskip
% =======

% >>>>>>> 3f405fa26e5f9ea8257d924d041f0c2b822b6753

\section{Appendix}\label{sec:app}

\subsection{Proof of (\ref{eq:l1-12})}\label{sec:3.1}
%This proof is a special case of \cite[lemma 3]{kim2018edgeworth}. 
From \eq{eq:l1-13}, the solution to \cref{eq:l1-20} is 
\be{g(w)=e^{w^2/2} \int_{-\infty}^w (f'(x)-\E f'(Z)) e^{-x^2/2}dx.}
By the integration by parts formula,
\begin{equation}
    \begin{aligned}
        \E g(Z) =& \frac{1}{\sqrt{2\pi}}\int_{w\in \mathbb{R}}^{ } \int_{-\infty}^w (f'(x)-\E f'(Z))
        e^{-x^2/2}dxdw
\\=& \frac{1}{\sqrt{2\pi}}w\int_{-\infty}^{w}  (f'(x)-\E f'(Z)
        e^{-x^2/2}dx\bigg\vert_{w=-\infty}^{w=\infty}\\
        &-\frac{1}{\sqrt{2\pi}}\int_{w\in \mathbb{R}}^{ } w  (f'(w)-\E f'(Z))
        e^{-w^2/2}dw
\\=& -\frac{1}{\sqrt{2\pi}}\int_{w\in \mathbb{R}}^{ } w  f'(w)
        e^{-w^2/2}dw.
    \end{aligned}
    \label{eq:1006}
\end{equation}
From \cref{eq:l1-1} and \eq{eq:l1-13}, we have
$$f'(w)=h(w)-\E h(Z)+ w e^{w^2/2} \int_{-\infty}^w (h(x)-\E h(Z)) e^{-x^2/2}dx.$$
Plugging this expression into \cref{eq:1006} and using integration by parts, we have
\begin{equation}
    \begin{aligned}
        \E g(Z)=&-\frac{1}{\sqrt{2\pi}} \int_{-\infty}^{\infty} w^2 \int_{-\infty}^{w}
        (h(x)-\E h(Z)) e^{-\frac{x^{2}}{2}}dxdw
\\&-\frac{1}{\sqrt{2\pi}} \int_{-\infty}^{\infty} w 
        (h(w)-\E h(Z)) e^{-\frac{w^{2}}{2}}dw
        \\=& -\frac{1}{\sqrt{2\pi}}  \frac{w^{3}}{3} \int_{-\infty}^{w}
        (h(x)-\E h(Z)) e^{-\frac{x^{2}}{2}}dx\bigg\vert^{\infty}_{-\infty}
        +\frac{1}{\sqrt{2\pi}} \int_{-\infty}^{\infty} \frac{w^{3}}{3}h(w)
        e^{-\frac{w^{2}}{2}}dw
         \\&-\frac{1}{\sqrt{2\pi}} \int_{-\infty}^{\infty}wh(w)e^{-\frac{x^{2}}{2}}dw
        \\=&\frac{1}{3}\E [(Z^3-3Z)h(Z)].
    \end{aligned}
    \label{eq:10015}
\end{equation}
This proves \cref{eq:l1-12}.

\subsection{Proof of (\ref{eq:t1-1})}\label{sec:3.2}

We will need the following two lemmas.
\begin{lemma}\label{lem:smoothbound}
    Let $h(x)$ be any bounded function and $P(x)$ be any polynomial. Define $h_{1}(x)=\E
    [h(\sigma Z+x) P(Z)] $ and $h_{2}(\sigma)=\E
    [h(\sigma Z) P(Z)]$, where $\sigma>0$ and $Z\sim N(0,1)$. Then for any $k\geq 0$, $h_{1}(x)$ and
    $h_{2}(\sigma)$ are $k$-th order
    differentiable and 
    \begin{equation}
        \begin{aligned}
            \left\lvert\frac{d^{k}}{dx^{k}}h_{1}(x)\right\rvert\leq
            \frac{C_{P,k}\|h\|_\infty}{\sigma^{k}},\quad \forall x\in \IR,
        \end{aligned}
        \label{eq:lem1}
    \end{equation}
\begin{equation}
        \begin{aligned}
            \left\lvert\frac{d^{k}}{d\sigma^{k}}h_{2}(\sigma)\right\rvert\leq
            \frac{\widetilde{C}_{P,k}\|h\|_\infty}{\sigma^{k}},\quad \forall x\in \IR,
        \end{aligned}
        \label{eq:lem5}
    \end{equation}
where $C_{P,k}$ and $\widetilde{C}_{P,k}$ are constants depending only on the polynomial $P$ and $k$. 
\end{lemma}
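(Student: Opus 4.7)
The plan is to transfer all dependence on $x$ (resp.\ $\sigma$) from the non-smooth function $h$ onto the smooth Gaussian-polynomial kernel via a change of variables, then differentiate under the integral sign. Since $h$ is merely bounded, each derivative cannot act on $h$ directly; instead it is absorbed by the chain rule applied to the smooth kernel, which is precisely what produces the $\sigma^{-k}$ factor.

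For $h_1(x)$, substituting $y = \sigma z + x$ gives
\be{
h_1(x) = \frac{1}{\sigma}\int_{\IR} h(y)\, P\bigl(\tfrac{y-x}{\sigma}\bigr)\, \phi\bigl(\tfrac{y-x}{\sigma}\bigr)\, dy,
}
where $\phi$ denotes the standard normal density. The $x$-dependence now lives entirely inside a $C^\infty$ function, so dominated convergence justifies $k$ differentiations under the integral; each one contributes a factor $-1/\sigma$ by the chain rule. Reverting to $z = (y-x)/\sigma$ yields
\be{
\frac{d^k}{dx^k}h_1(x) = \frac{(-1)^k}{\sigma^k}\,\E\bigl[h(\sigma Z+x)\,Q_k(Z)\bigr],
}
where $Q_k(z) := (P\phi)^{(k)}(z)/\phi(z)$ is a polynomial in $z$ (derivatives of $\phi$ are Hermite polynomials times $\phi$) depending only on $P$ and $k$. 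Taking absolute values gives \eqref{eq:lem1} with $C_{P,k} = \E\abs{Q_k(Z)}$.

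For $h_2(\sigma)$, I would argue by induction on $k$ that
\be{
\frac{d^k}{d\sigma^k} h_2(\sigma) = \frac{1}{\sigma^k}\,\E\bigl[h(\sigma Z)\,R_k(Z)\bigr]
}
for some polynomial $R_k$ depending only on $P$ and $k$, with $R_0 = P$. For the inductive step, I would rewrite the right-hand side via $y = \sigma z$ as $\sigma^{-k-1}\int h(y)\,R_k(y/\sigma)\phi(y/\sigma)\,dy$, differentiate in $\sigma$ under the integral, and change variables back. A short calculation using $\phi'(z) = -z\phi(z)$ and the product rule gives the recursion $R_{k+1}(z) = (z^2 - k - 1)R_k(z) - z R_k'(z)$, so $R_{k+1}$ is again a polynomial, and \eqref{eq:lem5} follows with $\widetilde C_{P,k} = \E\abs{R_k(Z)}$.

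The only technical point is justifying interchange of differentiation and integration at each step. After each change of variables the integrand factors as $h$ (bounded) times a $C^\infty$ kernel whose derivatives are dominated by polynomial-times-Gaussian expressions, uniformly for $x$ in compact sets and $\sigma$ bounded away from $0$; dominated convergence then applies. Everything else is bookkeeping of polynomial factors through the chain and product rules, and I do not foresee any genuine obstacle.
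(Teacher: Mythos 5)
Your proposal is correct and follows essentially the same route as the paper: rewrite the expectation as an integral over the translated/scaled variable, differentiate under the integral sign so that each derivative lands on the smooth Gaussian--polynomial kernel and produces a $\sigma^{-1}$ factor, then change variables back to express the $k$-th derivative as $\sigma^{-k}\,\E[h(\cdot)\,Q_k(Z)]$ for a polynomial $Q_k$ depending only on $P$ and $k$. The paper carries this out explicitly only for $k=1$ and invokes ``similar arguments'' for the rest, whereas you supply the general-$k$ bookkeeping (including the recursion for the $h_2$ case), which is a welcome completion rather than a different method.
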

\begin{proof}[Proof of \cref{lem:smoothbound}]
First, we rewrite $h_{1}(x)$ as the integral form and use change of variable to obtain 
\begin{equation}
    \begin{aligned}
        h_{1}(x)&=
              \frac{1}{\sqrt{2\pi}}\int_{y\in \mathbb{R}} P(y) h(\sigma y+x) e^{-\frac{y^{2}}{2}}dy
              \\&=\frac{1}{\sigma\sqrt{2\pi}}\int_{y\in \mathbb{R}} P\left(\frac{r-x}{\sigma}\right) h(r)
              e^{-\frac{(r-x)^{2}}{2\sigma^{2}}}dr.
    \end{aligned}
    \label{eq:lem2}
\end{equation}
The case $k=0$ follows from the boundedness of moments of $Z$. For $k=1$, taking first order derivative of $h_{1}(x)$, we have  
\begin{equation}
    \begin{aligned}
        \frac{d}{dx}h_{1}(x)&=\frac{1}{\sigma^{2}\sqrt{2\pi}}\int_{y\in \mathbb{R}}
        \left(-P'\left(\frac{r-x}{\sigma}\right)+P\left(\frac{r-x}{\sigma}\right)\frac{r-x}{\sigma}\right) h(r)
              e^{-\frac{(r-x)^{2}}{2\sigma^{2}}}dr
\\&=\frac{1}{\sigma\sqrt{2\pi}}\int_{y\in \mathbb{R}}
        \left(-P'\left(y\right)+yP\left(y\right)\right) h(\sigma
        y+x)
              e^{-\frac{y^{2}}{2}}dy
\\&= \frac{1}{\sigma}\E  \Big[\left(-P'\left(Z\right)+ZP\left(Z\right)\right) h(\sigma
        Z+x)\Big].
    \end{aligned}
    \label{eq:1002}
\end{equation}
From \cref{eq:1002}, we have
\begin{equation}
    \begin{aligned}
        \left\lvert  \frac{d}{dx}h_{1}(x)\right\rvert\leq \frac{\|h\|_\infty}{\sigma} \E \lvert
        -P'\left(Z\right)+ZP\left(Z\right)  \rvert,
    \end{aligned}
    \label{eq:1003}
\end{equation}
and we complete the proof for the case $k=1$. Following similar arguments, we get
\cref{eq:lem1} for $k\geq  2$
and $\cref{eq:lem5} $ for $k\geq 0$.
\end{proof}
\begin{lemma}
    \label{lem:2}
    Let $X_{1},\ldots,X_{n}$ be i.i.d.\ random variables with $\E X_{1}=0$, $\Var(X_1)=1$ and
    $\E \lvert X_1 \rvert^{3}< \infty$. For $h_{1}(x)$ defined in \cref{lem:smoothbound} and any
    $\sigma_{1}\geq 0$, we have
    \begin{equation}
        \begin{aligned}
            \left\lvert\E h_{1}\left( \frac{\sigma_{1}}{\sqrt{n}} \sum^{n}_{i=1} X_{i}\right)-\E h_{1}(
            \sigma_{1} Z
            )\right\rvert\leq \frac{C_P\|h\|_\infty  \sigma_{1}^{3}}{ \sigma^{3}\sqrt{n}}\E( \lvert
            X_{1}\rvert^3+\lvert Z \rvert^{3}),
        \end{aligned}
        \label{eq:1004}
    \end{equation}
    where $Z\sim N(0,1)$ and $C_P$ is a constant depending only on the polynomial $P$ in the definition of $h_1$.
\end{lemma}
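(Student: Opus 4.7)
The plan is to reduce the statement to a standard smooth-test-function central limit bound of Lindeberg type, exploiting the fact that $h_1$ inherits smoothness from the Gaussian convolution that defines it. First I would set
\[
f(x) := h_1(\sigma_1 x),
\]
so that the target becomes bounding $|\E f(W_n) - \E f(Z)|$ with $W_n = n^{-1/2}\sum_{i=1}^n X_i$. By the chain rule and \cref{lem:smoothbound} with $k=3$,
\[
\|f'''\|_\infty \;=\; \sigma_1^3\,\|h_1'''\|_\infty \;\leq\; \frac{C_{P,3}\,\|h\|_\infty\,\sigma_1^3}{\sigma^3}.
\]
This is precisely where the $\sigma_1^3/\sigma^3$ factor on the right-hand side of the claimed inequality comes from.

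Next I would run the Lindeberg swapping argument. Let $Z_1,\dots,Z_n$ be i.i.d.\ $N(0,1)$, jointly independent of $(X_i)_{i=1}^n$, and define the hybrid sums
\[
S_k := \frac{1}{\sqrt n}\Big(\sum_{i=1}^{k} X_i + \sum_{i=k+1}^n Z_i\Big), \qquad k=0,1,\dots,n,
\]
so that $S_n = W_n$ and $S_0 \eqlaw Z$. Telescoping gives
\[
\E f(W_n) - \E f(Z) \;=\; \sum_{k=1}^n \E\bigl[f(S_k) - f(S_{k-1})\bigr],
\]
and I would Taylor expand each increment to third order around $T_k := S_k - X_k/\sqrt n = S_{k-1} - Z_k/\sqrt n$. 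Since $\E X_k = \E Z_k = 0$ and $\E X_k^2 = \E Z_k^2 = 1$, with $X_k$ and $Z_k$ both independent of $T_k$, the zeroth-, first-, and second-order Taylor terms cancel in expectation, and each increment is bounded by $\|f'''\|_\infty\,\E(|X_1|^3 + |Z|^3)/(6\,n^{3/2})$. Summing the $n$ swaps and plugging in the derivative bound on $f$ gives the claim with $C_P = C_{P,3}/6$.

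I do not expect a genuine obstacle here: the argument is essentially the textbook Lindeberg proof, with the required third-derivative control on $f$ coming for free from \cref{lem:smoothbound}. The two mildly delicate points are tracking the $\sigma_1^3/\sigma^3$ factor correctly through the composition (the $\sigma_1^3$ arising from the change of variables in $f$ and the $1/\sigma^3$ from \cref{lem:smoothbound}), and observing that the $\sigma_1 = 0$ case is trivial since both sides collapse to $h_1(0)$. An alternative route would be Stein's method with the smooth Stein solution associated with $f$, but in this smooth-test-function regime the Lindeberg swap is shorter and yields the same constants.
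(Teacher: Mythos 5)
Your proposal is correct and follows essentially the same route as the paper: a Lindeberg swap over hybrid sums, with the zeroth-, first-, and second-order Taylor terms cancelling by moment matching and the third-order remainder controlled by the bound $\|h_1'''\|_\infty\leq C_{P,3}\|h\|_\infty/\sigma^3$ from \cref{lem:smoothbound}. The only cosmetic difference is that you package the composition into $f(x)=h_1(\sigma_1 x)$ before swapping, whereas the paper keeps the factor $\sigma_1/\sqrt{n}$ inside the argument of $h_1$ throughout; the constants and the $\sigma_1^3/\sigma^3$ factor come out identically.
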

\begin{proof}[Proof of \cref{lem:2}]
We prove this lemma by Lindeberg's swapping argument. Let $Z_{1},\ldots,Z_{n}$ be i.i.d.\
$N(0,1)$ random variables. We have
\begin{equation}
    \begin{aligned}
        &\left\lvert\E h_{1}\left( \frac{\sigma_{1}}{\sqrt{n}} \sum^{n}_{i=1} X_{i}\right)-\E h_{1}(
            \sigma_{1} Z
            )\right\rvert
        \\=& \sum^{n}_{k=1} \left\lvert\E h_{1}\left( \frac{\sigma_{1}}{\sqrt{n}}\left( \sum^{k}_{i=1}
            X_{i} + \sum^{n}_{i=k+1} Z_{i}\right) \right)-\E h_{1}\left(
            \frac{\sigma_{1}}{\sqrt{n}}\left( \sum^{k-1}_{i=1}
            X_{i} + \sum^{n}_{i=k} Z_{i}\right) \right)\right\rvert
                    \\\leq &\frac{  \sigma_{1}^{3}}{6 \sqrt{n}} \sup_{x\in \IR}\left\lvert
                    \frac{d^3}{dx^3}h_{1}(x) \right\rvert\E  (\lvert
            X_{1}\rvert^3+\lvert Z \rvert^{3}).
    \end{aligned}
    \label{eq:1005}
\end{equation}
From \cref{eq:lem1}, we complete the proof of this lemma.
\end{proof}

\begin{proof}[Proof of \eq{eq:t1-1}]
Suppose that $X_{1}\sim F(x)$
and $F(x)=p_{1} F_{1}(x)+ (1-p_{1}) F_{2}(x)$, where $F_{1}(x)$ is compactly supported and has density
$p(x)$ bounded away from 0, but $\int_{}^{ }x dF_{1}(x)$ is not equal to $0$ (otherwise \eq{eq:t1-1} follows directly from the condition of \cref{t1}). Suppose that the support of $F_{1}(x)$ is the interval
$[a_{1},a_{2}]$.

\medskip

\textbf{Case 1.} If $0\in (a_{1},a_{2})$, then by truncation, $F_{1}(x)$ has a component $U$ which satisfies
\cref{eq:t1-1}. Furthermore, since $U$ is also a component of $F(x)$, we complete the proof of
\cref{eq:t1-1} in this case.

\medskip

\textbf{Case 2.}
If $0\notin(a_{1},a_{2})$, without loss
of generality, we assume that $a_{1}\geq 0$. Since $\IE X_{1}=0$, there must exists a negative
constant $b$ such that $P(b-\varepsilon\leq X_{1}\leq b)>0$ for any $\varepsilon>0$. Let
$$F_{3}(x)=\frac{F(x)1_{\{x\leq b\}}}{F(b)},$$ we then have 
$$F(x)=p_{1} F_{1}(x)+p_{3}F_{3}(x)+p_{4}F_{4}(x),$$ where $p_{3}=F(b)$ and $p_{4}$ and
$F_{4}(x)$ are determined by this equation. Let $m_{1}$ be the smallest positive integer such
that $m_{1}|a_{1}-a_{2}|>3|b|$. The support of $F_{1}^{*m_{1}}(x)$ is
$[m_{1}a_{1},m_{1}a_{2}]$, where $^*$ denotes convolutions.  Let $m_{2}$ be the smallest positive integer such that
$m_{2}|b|> m_1 a_{1}$. Then we have $$F_{3}^{*m_{2}}(m_{2}b)-F_{3}^{*m_{2}}(m_{2}b-\varepsilon)>0$$ for any
$\varepsilon\geq 0$, and  
\begin{equation}
    (m_{2}-1)|b|\leq m_{1}a_{1} < m_{2}|b| < m_{1}a_{2}. 
    \label{eq:1001}
\end{equation}
By \cref{eq:1001}, there exists a component $\widetilde{F}(x)$ of $F_{1}^{*m_{1}}*F_{3}^{*m_{2}}(x)$ such
that $\widetilde{F}(x)$ is compactly
supported, has density bounded away from 0 and the interior of the support of
$\widetilde{F}(x)$
contains $0$.   Since $F_{1}^{*m_{1}}*F_{3}^{*m_{2}}(x)$ is a component of
$F^{*(m_{1}+m_{2})}(x)$, we have $\widetilde{F}(x)$ is also a component of
$F^{*(m_{1}+m_{2})}(x)$. By the same argument as that in \textbf{Case 1} and let
$m=m_{1}+m_{2}$, we infer that $F^{*m}(x)$ has a component $U$ satisfying \cref{eq:t1-1}.

\medskip

Let $n=km+r$ for some integers $k\geq 1$ (\cref{t1} trivially holds for bounded $n$) and $0\leq r<m$. The case
$r=0$ follows from the proof of \cref{t1}. We now consider the case $r>0$. Let $Z, \tilde{Z}, Z_{1},Z_{2},\cdots,Z_{n}$ be i.i.d.\
$N(0,1)$ random variables and independent of $\{X_1, \dots, X_n\}$. We have
\begin{equation}
    \begin{aligned}
        &\E h(W) -\E h(Z)\\ =&  \E h\left( \frac{1}{\sqrt{n}}\left(\sum^{km}_{i=1}
        X_{i}+\sum^{n}_{i=km+1} X_{i}\right) \right) -\E h\left( \frac{1}{\sqrt{n}}\left(\sum^{km}_{i=1}
                    Z_{i}+\sum^{n}_{i=km+1} X_{i}\right) \right)\\&+\ \E h\left( \frac{1}{\sqrt{n}}\left(\sum^{km}_{i=1}
        Z_{i}+\sum^{n}_{i=km+1} X_{i}\right) \right) -\E h\left( \frac{1}{\sqrt{n}}\left(\sum^{km}_{i=1}
Z_{i}+\sum^{n}_{i=km+1} Z_{i}\right) \right)
                    \\ =:& H_{1}+H_{2}.
    \end{aligned}
    \label{eq:1007}
\end{equation}
For $H_{1}$, conditioning on $\{X_{km+1},\ldots,X_{n}\}$ first and applying
\cref{t1} (i.e. the case $r=0$), we have  
\begin{equation}
    \begin{aligned}
        H_{1}=&   \E h\left(
                    \frac{\sqrt{km}}{\sqrt{n}}\left(\frac{1}{\sqrt{km}}\sum^{km}_{i=1}
                X_{i} \right )+\frac{1}{\sqrt{n}}\sum^{n}_{i=km+1} X_{i} \right) -\E h\left(
                    \frac{\sqrt{km}}{\sqrt{n}}Z+\frac{1}{\sqrt{n}}\sum^{n}_{i=km+1} X_{i}
            \right)
        \\ =&\frac{ \gamma}{6\sqrt{km}} \E (Z^3-3Z) h\left( \frac{\sqrt{km}}{\sqrt{n}}
        Z+\frac{1}{\sqrt{n}}\sum^{n}_{i=km+1} X_{i} \right)+ O(\frac{1}{n})
          \\ =& \frac{ \gamma}{6\sqrt{km}} \E (Z^3-3Z) h\left( \frac{\sqrt{km}}{\sqrt{n}}
          Z+\frac{\sqrt{r}}{\sqrt{n}} \tilde{Z} \right)+ O(\frac{1}{n}),
    \end{aligned}
    \label{eq:1008}
\end{equation}
where we used \cref{lem:2} in the last equality.
Using Gaussian integration by parts (and approximating h by arbitrarily close smooth
functions in the intermediate step), we have
\begin{equation}
    \begin{aligned}
        &\E (Z^3-3Z) h\left( \frac{\sqrt{km}}{\sqrt{n}}
          Z+\frac{\sqrt{r}}{\sqrt{n}} \tilde{Z} \right)
        =-\frac{\sqrt{k^{3}m^{3}}}{\sqrt{n^{3}}}\E  h'''\left( \frac{\sqrt{km}}{\sqrt{n}}
          Z+\frac{\sqrt{r}}{\sqrt{n}} \tilde{Z} \right)
\\=&- \frac{\sqrt{k^{3}m^{3}}}{\sqrt{n^{3}}}\E  h'''\left( Z \right)
= \frac{\sqrt{k^{3}m^{3}}}{\sqrt{n^{3}}}\E [(Z^3-3Z) h\left( Z \right)].
    \end{aligned}
    \label{eq:10012}
\end{equation}
From \cref{eq:1008,eq:10012}, we have
\begin{equation}
    H_{1}= \frac{km\gamma}{6\sqrt{n^{3}}}\E [(Z^3-3Z) h(Z)]+O(\frac{1}{n})=\frac{\gamma}{6\sqrt{n}}\E [(Z^3-3Z) h(Z)]+O(\frac{1}{n}).
    \label{eq:10013}
\end{equation}
For $H_{2}$, let 
$$h_{1}(x)= \E  h\big(\frac{\sqrt{km}}{\sqrt{n}} Z+ x\big).$$
%$$h_{1}(x)= \E \Big[(Z^3-3Z) %h\big(\frac{\sqrt{km}}{\sqrt{n}} Z+ %x\big)\Big].$$
Then, by Lindeberg's swapping argument, 
\begin{equation}
    \begin{aligned}
        H_{2}&=\E h_{1}\left( \sum^{n}_{i=km+1} \frac{X_{i}}{\sqrt{n}}\right) -\E h_{1}\left( \sum^{n}_{i=km+1} \frac{X_{i}}{\sqrt{n}}\right)
\\&= \sum^{n}_{j=km+1} \Biggl\{\E h_{1}\left( \sum^{j}_{i=km+1} \frac{X_{i}}{\sqrt{n}}+\sum^{n}_{i=j+1}
\frac{Z_{i}}{\sqrt{n}}\right) -\E h_{1}\left( \sum^{j-1}_{i=km+1} \frac{X_{i}}{\sqrt{n}}+\sum^{n}_{i=j} \frac{Z_{i}}{\sqrt{n}}\right)\Biggr\}
\\&= \sum^{n}_{j=km+1} \Biggl\{\E h_{1}'\left( \sum^{j-1}_{i=km+1} \frac{X_{i}}{\sqrt{n}}+\sum^{n}_{i=j+1}
\frac{Z_{i}}{\sqrt{n}}\right)\left(\frac{X_{j}}{\sqrt{n}}-\frac{Z_{j}}{\sqrt{n}}\right) 
\\&\quad\quad\quad\quad\quad+\E h_{1}''\left( \sum^{j-1}_{i=km+1} \frac{X_{i}}{\sqrt{n}}+\sum^{n}_{i=j+1}
\frac{Z_{i}}{\sqrt{n}}\right)\left(\frac{X_{j}^{2}}{2{n}}-\frac{Z_{j}^{2}}{2{n}}\right)
\\&\quad\quad\quad\quad\quad+ O(1) \lVert h_{1}''' \rVert_\infty \E\left(
\left\vert\frac{X_{i}^{3}}{{n^{3/2}}}\right\vert+\left\vert\frac{Z_{i}^{3}}{{n^{3/2}}}\right\vert\right)\Biggr\}
\\&= O(\frac{r}{n^{3/2}})(\E|X_{1}|^{3}+\E|Z|^{3}),
    \end{aligned}
    \label{eq:10011}
\end{equation}
 where we use \cref{lem:smoothbound} in the last equality.
 %. and
% \cref{eq:intro-1} we have,
% \begin{equation}
%     \begin{aligned}
%         H_{2}= \frac{r \gamma}{\sqrt{n^3}} \E (\tilde{Z}^3-3\tilde{Z}) h\left( \frac{\sqrt{km}}{\sqrt{n}}
%           Z+\frac{\sqrt{r}}{\sqrt{n}} \tilde{Z} \right)
%     \end{aligned}
%     \label{eq:1009}
% \end{equation}
 Combining \cref{eq:10013,eq:10011}, we have
\begin{equation}
    \begin{aligned}
        H_{1}+H_{2}=\frac{\gamma}{6\sqrt{n}}\E (Z^3-3Z)h(Z)+O(\frac{1}{n}).
    \end{aligned}
    \label{eq:10010}
\end{equation}
Thus, we have proved for the case $r>0$.
%only proving for the case $r=0$ (i.e., $n$ can be divided by $m$ without any remainder) in \cref{t1} is enough.
\end{proof}

\subsection{Proof of (\ref{eq:t1-7})}\label{sec:3.3}

\eq{eq:t1-7} follows immediately from \cref{lem:smoothbound,lem:2}.

\subsection{Proof of Lemma \ref{lem:binomial}}\label{sec:3.4}
In this subsection, we use $O(1)$ to denote a quantity which is bounded in absolute value by a universal constant. We will use the following lemma. 
\begin{lemma}
    Let $n\geq 1$ and  $X_{1},\ldots,X_{n}$ be i.i.d.\ $Ber(1/2)$ random variables. Then we have, for any
    constant $x$ in the set $A_n:=\{\frac{2z}{\sqrt{n}}-\sqrt{n},z=0,\cdots,n\}\cap
    [-n^{1/4}/8,{n^{1/4}}/8] $, 
    \begin{equation}
        \begin{aligned}
            \P\left(\frac{2}{\sqrt{n}}\sum^{n}_{i=1} (X_{i}-\frac{1}{2})=x\right)=&
           \frac{2}{\sqrt{2\pi n}} 
           e^{-\frac{x^2}{2}} \left(1+O\left(\frac{1+x^4}{n}\right)\right)\\
            =& \frac{1}{\sqrt{2\pi}} \int_{x-\frac{1}{\sqrt{n}}}^{x+\frac{1}{\sqrt{n}}}
            e^{-\frac{y^2}{2}} dy\left(1+O\left(\frac{1+x^4}{n}\right)\right).
        \end{aligned}
        \label{eq:10014}
    \end{equation}
    \label{lem:4}
\end{lemma}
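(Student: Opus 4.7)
The plan is to prove this local limit theorem by direct asymptotic expansion of the binomial coefficient. Writing $S_n=\sum_{i=1}^n X_i$ and setting $z=n/2+x\sqrt n/2$, the event $\{(2/\sqrt n)\sum(X_i-1/2)=x\}$ is exactly $\{S_n=z\}$, whose probability is $\binom{n}{z}2^{-n}$; here $x\in A_n$ guarantees that $z\in\{0,1,\dots,n\}$. My goal is to show
\be{
\log\bklr{\tbinom{n}{z}2^{-n}}=\log\tfrac{2}{\sqrt{2\pi n}}-\tfrac{x^2}{2}+O\bklr{(1+x^4)/n},
}
uniformly for $|x|\leq n^{1/4}/8$, from which the first equality of the lemma follows by exponentiating and using $e^{\varepsilon}=1+O(\varepsilon)$ on bounded $\varepsilon$.

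First I would apply Stirling's formula $\log k!=(k+\tfrac12)\log k-k+\tfrac12\log(2\pi)+\tfrac{1}{12k}+O(k^{-3})$ to each of $\log n!$, $\log z!$, $\log(n-z)!$. The cancellations among the $-k$'s and the $\log(2\pi)$'s combine with $\log(n/2)$ factors extracted from $\log z$ and $\log(n-z)$ to produce the leading factor $2/\sqrt{2\pi n}$; what remains is
\be{
-(z+\tfrac12)\log(1+u)-(n-z+\tfrac12)\log(1-u)+\tfrac{1}{12}\bklr{\tfrac{1}{n}-\tfrac{1}{z}-\tfrac{1}{n-z}}+O(n^{-3}),
}
with $u:=x/\sqrt n$, noting that $z=n(1+u)/2$ and $n-z=n(1-u)/2$. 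Next I would Taylor expand using the two identities
\be{
(1+u)\log(1+u)+(1-u)\log(1-u)=u^2+\tfrac{u^4}{6}+O(u^6),\qquad \log(1-u^2)=-u^2+O(u^4),
}
to rewrite the first two summands as $(n/2)(u^2+u^4/6)+\tfrac12(-u^2)+O(nu^6\vee u^4)$. Substituting $u=x/\sqrt n$ yields $\tfrac{x^2}{2}+\tfrac{x^4}{12n}-\tfrac{x^2}{2n}+O((1+x^4)/n^2)$, and the Stirling correction contributes a further $O(1/n)$ since $1/z,1/(n-z)=O(1/n)$ on the range where $|u|\leq n^{-1/4}/8$. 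Collecting, all error terms are verifiable to be $O((1+x^4)/n)$ uniformly on $|x|\leq n^{1/4}/8$: in particular $x^6/n^2\leq C(1+x^4)/n$ because $x^6/n\leq \sqrt n\cdot x^4/\sqrt n\leq Cx^4$ whenever $|x|\geq 1$, and is $O(1/n)$ when $|x|<1$.

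For the second equality of \cref{eq:10014}, I would perform a direct Taylor expansion of the Gaussian density. Writing $y=x+t$ with $|t|\leq 1/\sqrt n$ gives $e^{-y^2/2}=e^{-x^2/2}e^{-xt-t^2/2}=e^{-x^2/2}\bklr{1-xt+O(t^2(1+x^2))}$, so by symmetry of the integration interval the linear-in-$t$ term vanishes and
\be{
\tfrac{1}{\sqrt{2\pi}}\int_{x-1/\sqrt n}^{x+1/\sqrt n}e^{-y^2/2}dy=\tfrac{2}{\sqrt{2\pi n}}e^{-x^2/2}\bklr{1+O((1+x^2)/n)},
}
which matches the first expression up to a relative error absorbed into $O((1+x^4)/n)$. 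The main obstacle is purely bookkeeping: one must verify uniformly in $|x|\leq n^{1/4}/8$ that each truncated Taylor tail (from $\log(1\pm u)$, from $\log(1-u^2)$, and from the $1/(12k)$ Stirling term) collapses into the advertised $O((1+x^4)/n)$ relative error, which is what the choice of the cutoff $n^{1/4}/8$ is designed to ensure.
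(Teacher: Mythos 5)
Your proposal is correct and follows essentially the same route as the paper: Stirling's formula for the binomial coefficient, Taylor expansion of $\log(1\pm x/\sqrt{n})$ with the cutoff $|x|\leq n^{1/4}/8$ controlling all remainders, and a direct second-order expansion of the Gaussian density over the symmetric window of length $2/\sqrt{n}$. The only blemish is a harmless sign slip in your intermediate display: the first two summands equal $-\tfrac{n}{2}\bigl(u^2+\tfrac{u^4}{6}\bigr)+\tfrac{u^2}{2}+O(nu^6\vee u^4)$, not its negative, but since these sub-leading terms are all absorbed into the $O((1+x^4)/n)$ relative error and your stated goal carries the correct $-x^2/2$, the conclusion is unaffected.
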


\begin{proof}[Proof of \cref{lem:4}]
In this proof, we will use the following inequality:
    \begin{equation}
        \begin{aligned}
            \alpha-\frac{1}{2}\alpha^{2} +\frac{1}{3} \alpha^{3}- \alpha^{4}\leq \log(1+\alpha) \leq
            \alpha-\frac{1}{2}\alpha^{2}+\frac{1}{3}\alpha^{3},\quad \forall \lvert \alpha \rvert\leq \frac{1}{8},
        \end{aligned}
        \label{eq:10020}
    \end{equation}
which is proved by Taylor's expansion.
Without loss of generality, we assume that $x\in A_n$, $x\geq 0$, and let $z$ be the integer such that $\frac{2z}{\sqrt{n}}-\sqrt{n}=x$.
Assume $n$ is sufficiently large (otherwise \eq{eq:10014} is trivial) so that $z$ and $n-z$ are sufficiently large.
Then 
\begin{equation}
    \begin{aligned}
        \P\left(\frac{2}{\sqrt{n}}\sum^{n}_{i=1} (X_{i}-\frac{1}{2})=x\right)=\P\left( \sum^{n}_{i=1}
        X_{i}=z\right)=\frac{n!}{z!(n-z)!}\frac{1}{2^{n}}.
    \end{aligned}
    \label{eq:10016}
\end{equation}
Using the Stirling formula
\begin{equation*}
    \begin{aligned}
        n!=\sqrt{2\pi n} \left( \frac{n}{e} \right)^{n} \left(1+ O\left(\frac{1}{n}
        \right)\right),
    \end{aligned}
    %\label{eq:10017}
\end{equation*}
we have 
\begin{equation}
    \begin{aligned}
        \frac{n!}{z!(n-z)!}\frac{1}{2^{n}}=\frac{1}{2^n}\frac{\sqrt{2\pi n} n^{n} }{\sqrt{2\pi z}
            \sqrt{ 2\pi (n-z)}z^{z} (n-z)^{(n-z)}}  \frac{\left( 1+O\left(
                    \frac{1}{n}
                \right) \right)}{\left( 1+O\left( \frac{1}{z} \right) \right)\left(
                1+O\left( \frac{1}{n-z} \right)
    \right)}.
    \end{aligned}
    \label{eq:10018}
\end{equation}
Plugging $z=\frac{1}{2}\sqrt{n} x+\frac{n}{2}$ into \cref{eq:10018} and using $x\in [-n^{1/4}/8, n^{1/4}/8]$, 
%and by some simplification,
we have 
\begin{equation}
    \begin{aligned}
       & \P\left(\frac{2}{\sqrt{n}}\sum^{n}_{i=1}
    (X_{i}-\frac{1}{2})=x\right)\\=&\frac{2}{\sqrt{2 \pi n} }
            \left(1-\frac{x^{2}}{n}\right)^{-\frac{1}{2}} \left( 1+\frac{x}{\sqrt{n}}
            \right)^{-\frac{\sqrt{n}}{x} \frac{x^2}{2}} \left( 1-\frac{x}{\sqrt{n}}
            \right)^{\frac{\sqrt{n}}{x} \frac{x^2}{2}} \left(1-\frac{x^{2}}{n}
    \right)^{-\frac{n}{x^{2}} \frac{x^{2}}{2}}\\&\quad\quad\quad\times\left( 1+O\left(
        \frac{1}{n}+\frac{1}{n+\sqrt{n}x}+ \frac{1}{n-\sqrt{n}x} \right) \right)
\\=&\frac{2}{\sqrt{2 \pi n} }
           \left( 1+\frac{x}{\sqrt{n}}
            \right)^{-\frac{\sqrt{n}}{x} \frac{x^2}{2}} \left( 1-\frac{x}{\sqrt{n}}
            \right)^{\frac{\sqrt{n}}{x} \frac{x^2}{2}} \left(1-\frac{x^{2}}{n}
    \right)^{-\frac{n}{x^{2}} \frac{x^{2}}{2}}\\&\quad\quad\quad\times\left( 1+O\left(
\frac{1}{n}+\frac{1}{n+\sqrt{n}x}+ \frac{1}{n-\sqrt{n}x}+\frac{x^{2}}{n} \right) \right).
    \end{aligned}
    \label{eq:10019}
\end{equation}
Because $\lvert x/\sqrt{n}\vert\leq \lvert x/n^{1/4}\vert\leq 1/8 $, applying \cref{eq:10020}, we have 
\begin{align}\label{eq:10022}
   \nonumber \left( 1+\frac{x}{\sqrt{n}} \right)^{\frac{\sqrt{n}}{x}} =  &
e^{1-\frac{x}{2\sqrt{n}}+O\left( \frac{x^{2}}{n}\right)},\\
   \nonumber \left( 1-\frac{x}{\sqrt{n}} \right)^{\frac{\sqrt{n}}{x}} =  &
e^{-1-\frac{x}{2\sqrt{n}}+O\left( \frac{x^{2}}{n}\right)},\\
   \left( 1-\frac{x^{2}}{n} \right)^{\frac{n}{x^{2}}}=&e^{-1+O\left( \frac{x^{2}}{n} \right)}.
\end{align}
Plugging \cref{eq:10022} into \cref{eq:10019}, we obtain
\begin{equation}
    \begin{aligned}
        \P\left(\frac{2}{\sqrt{n}}\sum^{n}_{i=1}
            (X_{i}-\frac{1}{2})=x\right)=\frac{2}{\sqrt{2\pi
        n}}e^{-\frac{x^{2}}{2}}\left(1+O\left(\frac{1+x^{4}}{n}\right)\right).
    \end{aligned}
    \label{eq:10023}
\end{equation}
For the integral on the right hand side of \cref{eq:10014}, we have
\begin{equation}
    \begin{aligned}
        &\quad\int_{x-\frac{1}{\sqrt{n}}}^{x+\frac{1}{\sqrt{n}}} e^{-\frac{y^{2}}{2}} dy
        =
        e^{-\frac{x^{2}}{2}}\int_{x-\frac{1}{\sqrt{n}}}^{x+\frac{1}{\sqrt{n}}} e^{-\frac{y^{2}-x^{2}}{2}} dy
        \\
        &= e^{-\frac{x^{2}}{2}}\int_{-\frac{1}{\sqrt{n}}}^{\frac{1}{\sqrt{n}}} e^{-xr-\frac{r^{2}}{2}} dr
        = 
        e^{-\frac{x^{2}}{2}}  \int_{-\frac{1}{\sqrt{n}}}^{\frac{1}{\sqrt{n}}} \Big[1-xr-r^{2}
        + O\left(xr+\frac{r^{2}}{2}\right)^{2}\Big] dr
       \\
        &= \frac{2}{\sqrt{n}} e^{-\frac{x^{2}}{2}}\left(1+O\left( \frac{1+x^{2}}{n} \right)\right).
    \end{aligned}
    \label{eq:10024}
\end{equation}
Combining \cref{eq:10024,eq:10023}, we complete the proof. 
\end{proof}
We now prove \cref{lem:binomial}.
\begin{proof} [Proof of \cref{lem:binomial}] 
Denote $$A_{l}={ \{{2z}/{\sqrt{l}}-\sqrt{l},z=0,\cdots,l\}\cap
[-l^{1/4}/8,{l^{1/4}}/8] }$$ and $$\widetilde{A}_{l}={ \{{2z}/{\sqrt{l}}-\sqrt{l},z=0,\cdots,l\}\cap
[-l^{1/4}/8,{l^{1/4}}/8]^{c} }.$$ 
From the condition that $h$ equals a constant in $(z-1/2, z+1/2)$ for $z\in \IZ$ and Gaussian tail bounds,
%By the property of Gaussian integration, 
we have
\begin{equation}
    \begin{aligned}
        &\E h(S)-\E h\left(\frac{l}{2}+Z\sqrt{\frac{l}{4}}  \right)\\
        =&\E h\left( \frac{S-\frac{l}{2}}{\frac{\sqrt{l}}{2}} \frac{\sqrt{l}}{2}+ \frac{l}{2}\right)-\E h\left(\frac{l}{2}+Z\sqrt{\frac{l}{4}} \right)
        \\=& \sum^{}_{x\in A_{l}}  h\left(\frac{\sqrt{l}}{2} x+ \frac{l}{2}\right) \left ( P\left(\frac{S-l/2}{\sqrt{l}/2}=x\right)-
        \frac{1}{\sqrt{2 \pi}} \int_{x-1\sqrt{l}}^{x+1\sqrt{l}} e^{-\frac{y^2}{2}}dy \right )
         \\&   +\sum^{}_{x\in \widetilde{A}_{l}}  h\left(\frac{\sqrt{l}}{2} x+ \frac{l}{2}\right)  \left (P\left(\frac{S-l/2}{\sqrt{l}/2}=x\right)-
         \frac{1}{\sqrt{2 \pi}} \int_{x-1\sqrt{l}}^{x+1\sqrt{l}} e^{-\frac{y^2}{2}}dy \right )+O(\frac{1}{l})
            \\=:&R_1+R_2+O(\frac{1}{l}).
    \end{aligned}
    \label{eq:10026}
\end{equation}
From \cref{lem:4}, we have 
\begin{equation}
    \begin{aligned}
       & \sum^{ }_{x\in A_l } \left\lvert P\left(\frac{S-l/2}{\sqrt{l}/2}=x\right)-
        \frac{1}{\sqrt{2 \pi}} \int_{x-1\sqrt{l}}^{x+1\sqrt{l}} e^{-\frac{y^2}{2}}dy \right\rvert
        \\=& O(\frac{1}{l})\sum^{ }_{x\in A_l }  
        (1+x^4) \frac{1}{\sqrt{2 \pi}} \int_{x-1\sqrt{l}}^{x+1\sqrt{l}} e^{-\frac{y^2}{2}}dy
= O(\frac{1}{l}).
\end{aligned}
    \label{eq:10025}
\end{equation}
From \cref{eq:10025}, we have
\ben{\label{eq:10030}R_1=O(\frac{1}{l}).}
For $R_2$,  we have
    \begin{align*}
    %\label{eq:10028}
        |R_2|\leq& \sum^{}_{x\in \widetilde{A}_{l}}    \left (P\left(\frac{S-l/2}{\sqrt{l}/2}=x\right)+
         \frac{1}{\sqrt{2 \pi}} \int_{x-1\sqrt{l}}^{x+1\sqrt{l}} e^{-\frac{y^2}{2}}dy
         \right ).
         %\nonumber\\
%          =& 1- \sum^{}_{x\in {A}_{l}}    P\left(\frac{S-l/2}{\sqrt{l}/2}=x\right)+
%           \sum^{ }_{x\in \widetilde{A}_{l}}   \frac{1}{\sqrt{2 \pi}} \int_{x-1\sqrt{l}}^{x+1\sqrt{l}} e^{-\frac{y^2}{2}}dy
%  \nonumber       \\ =& 1- \sum^{}_{x\in {A}_{l}}   \left( P\left(\frac{S-l/2}{\sqrt{l}/2}=x\right)-\frac{1}{\sqrt{2 \pi}} \int_{x-1\sqrt{l}}^{x+1\sqrt{l}} e^{-\frac{y^2}{2}}dy\right)
%           \\&-\sum^{ }_{x\in {A}_{l}}   \frac{1}{\sqrt{2 \pi}} \int_{x-1\sqrt{l}}^{x+1\sqrt{l}} e^{-\frac{y^2}{2}}dy 
%         +  \sum^{ }_{x\in \widetilde{A}_{l}}   \frac{1}{\sqrt{2 \pi}} \int_{x-1\sqrt{l}}^{x+1\sqrt{l}} e^{-\frac{y^2}{2}}dy
%           \nonumber          \\ \leq &  \sum^{}_{x\in {A}_{l}}     \left \vert\P\left(\frac{S-l/2}{\sqrt{l}/2}=x\right)-\frac{1}{\sqrt{2 \pi}} \int_{x-1\sqrt{l}}^{x+1\sqrt{l}} e^{-\frac{y^2}{2}}dy  \right \vert
%          \\&+ 2 \sum^{ }_{x\in \widetilde{A}_{l}}   \frac{1}{\sqrt{2 \pi}} \int_{x-1\sqrt{l}}^{x+1\sqrt{l}} e^{-\frac{y^2}{2}}dy+
%          O\left( \frac{1}{l} \right).
    \end{align*}
For $x\in \widetilde A_l$, we have $|x|> l^{1/4}/8$. Therefore, from binomial and Gaussian tail bounds, we have
    %By the property that the tail  probability %of Gaussian distribution and binomial %distribution is $O(1/l)$, we obtain
    \begin{equation}
        R_2=O(\frac{1}{l}).
        \label{eq:10029}
    \end{equation}
    Combining \cref{eq:10026,eq:10030,eq:10029}, we complete the proof.
\end{proof}

\subsection{Stein kernel for Gaussian mixtures}\label{sec:3.5}

The following lemma was used in the proofs of \cref{t1,t2} to apply the Stein kernel bound \eq{eq:l1-0} to Gaussian mixtures.
\begin{lemma}\label{lem:mixture}
Let 
\be{
Y=
\begin{cases}
Z_1\sim N(\mu_1, \sigma_1^2),& \text{with probability}\  p,\\
Z_2 \sim N(\mu_2, \sigma_2^2), & \text{with probability}\  1-p.
\end{cases}
}
Let $\tau$ be its Stein kernel.
Then we have $\|\tau\|_\infty\leq C$, where $C$ is a positive constant depending only on $p, \mu_1, \sigma_1^2, \mu_2, \sigma_2^2$.
\end{lemma}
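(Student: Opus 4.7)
My plan is to use the explicit formula \eqref{eq:saumard} for the Stein kernel and reduce the problem to tail asymptotics of a two-component Gaussian mixture. The density $p_Y(x) = p\phi_1(x) + (1-p)\phi_2(x)$ is strictly positive and continuous on $\IR$, where $\phi_i$ denotes the $N(\mu_i,\sigma_i^2)$ density. Hence $\tau$ is continuous on $\IR$, so boundedness of $\tau$ on all of $\IR$ reduces to showing that $\tau(x)$ has finite limits as $x\to \pm\infty$.

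First I would compute the numerator in closed form. Writing $y-\mu = (y-\mu_i) + (\mu_i - \mu)$ for each mixture component and using the elementary identity $\int_x^\infty (y-\mu_i)\phi_i(y)\,dy = \sigma_i^2 \phi_i(x)$, I obtain
\[
\int_x^\infty(y-\mu)p_Y(y)\,dy = p\sigma_1^2 \phi_1(x) + (1-p)\sigma_2^2 \phi_2(x) + p(\mu_1-\mu)\bar\Phi_1(x) + (1-p)(\mu_2-\mu)\bar\Phi_2(x),
\]
where $\bar\Phi_i$ is the corresponding right tail. Note that $p(\mu_1-\mu)+(1-p)(\mu_2-\mu)=0$, which is what makes the cancellation work in the equal-variance case.

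Next I would analyze the ratio $\tau(x)=\text{(numerator)}/p_Y(x)$ at infinity, splitting into cases by WLOG assuming $\sigma_1\leq \sigma_2$. Using Mills' ratio $\bar\Phi_i(x)=O(\phi_i(x)/|x|)$ as $|x|\to\infty$, the $\bar\Phi_i$ contributions are negligible compared with the $\phi_i$ contributions. If $\sigma_1<\sigma_2$, then $\phi_1(x)/\phi_2(x)\to 0$ super-polynomially as $|x|\to\infty$, so both numerator and denominator are dominated by their $\phi_2$ pieces, yielding $\tau(x)\to \sigma_2^2$. If $\sigma_1=\sigma_2=:\sigma$, then $\phi_1(x)/\phi_2(x)$ is an exponential in $x$, so whichever component has the more extreme mean in the appropriate direction dominates and again $\tau(x)\to \sigma^2$ (and the case $\mu_1=\mu_2$ reduces $Y$ to a single Gaussian). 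The analysis at $x\to -\infty$ is symmetric after using $\int(y-\mu)p_Y(y)\,dy = 0$ to rewrite $\int_x^\infty$ as $-\int_{-\infty}^x$.

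The main (minor) obstacle is merely bookkeeping: controlling the uniform constants in the asymptotics depending on the parameters $p,\mu_i,\sigma_i^2$. Since there are finitely many parameter cases and each limit is explicitly $\sigma_1^2$ or $\sigma_2^2$, combining with continuity on compacts yields the stated bound $\|\tau\|_\infty \leq C$ with $C$ depending only on $p,\mu_1,\sigma_1^2,\mu_2,\sigma_2^2$.
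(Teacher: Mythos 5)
Your proposal is correct and follows essentially the same route as the paper: the explicit formula \eqref{eq:saumard}, the identity $\int_x^\infty (y-\mu_i)\phi_i(y)\,dy=\sigma_i^2\phi_i(x)$ to compute the numerator, a Mills-ratio bound to control the $\bar\Phi_i$ terms for $x\to+\infty$, and the alternative expression $-\frac{1}{p_Y(x)}\int_{-\infty}^x(y-\mu)p_Y(y)\,dy$ for $x\to-\infty$. The only difference is cosmetic: you additionally identify the limits of $\tau$ at $\pm\infty$, whereas the paper stops at boundedness.
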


\begin{proof}[Proof of \cref{lem:mixture}]
Assume without loss of generality that $0<p<1$. Otherwise, the lemma follows from the fact that the Stein kernel for a Gaussian variable equals its variance.
We first consider the case $x\geq 0$.
Let $\phi_1(x)$ and $\phi_2(x)$ be the density of $N(\mu_1, \sigma_1^2)$ and $N(\mu_2, \sigma_2^2)$, respectively. Then, $Y$ has density
\be{
p(x)=p\phi_1(x)+q\phi_2(x),\ q=1-p.
}
From the expression of Stein kernel in \eq{eq:saumard}, we have
\be{
\tau(x)=\frac{1}{p\phi_1(x)+q\phi_2(x)}\int_x^\infty (y-p \mu_1-q\mu_2) (p\phi_1(y)+q\phi_2(y))dy.
}
Using 
\be{
\int_x^\infty (y-\mu_i) \phi_i(y)dy=\sigma_i^2 \phi_i(x),\ i=1,2,
}
we obtain
\be{
\tau(x)=\frac{p\sigma_1^2 \phi_1(x)+q\sigma_2^2 \phi_2(x) +q(\mu_1-\mu_2) \int_x^\infty \phi_1(y)dy +p(\mu_2-\mu_1) \int_x^\infty \phi_2(y)dy    }{p\phi_1(x)+q\phi_2(x)},
}
which is bounded because $\int_x^\infty \phi_i(y)dy$ decays proportional to $\phi_i(x)/x$ for large $x$ and $i=1,2$.

The case $x<0$ is proved similarly using the alternative expression of the Stein kernel
\be{
\tau(x)=-\frac{1}{p(x)}\int_{-\infty}^x (y-\mu) p(y)dy.
}
\end{proof}

\subsection{Existence of Bernoulli component}\label{sec:GCD}

Recall the condition of \cref{t2}: The support of $X_1$ is $\{s_0,s_1,s_2,\dots\}$ such that $\{|s_i-s_0|,i\geq 1\}$ has the greatest common divisor 1. By successively looking for numbers that decrease the common divisor, this condition implies that there exists a finite $r$ such that $\{|s_i-s_0|, 1\leq i\leq r\}$ has the greatest common divisor 1. By shifting invariance, we assume without loss of generality that $0=s_0<s_1<\dots<s_r$. Repeatedly using B\'ezout's identity, there exist non-zero integers $m_1,\dots, m_r$ such that
\be{
m_1 s_1+\dots +m_r s_r=1.
}
Let $X_{ij}, i,j\geq 1$ follow the same distribution as $X_1$. For $1\leq i\leq r$, let 
\be{
t_i=
\begin{cases}
0, & \text{if}\  m_i>0\\
-s_i, & \text{if}\  m_i<0,
\end{cases}
}
and
\be{
Y_i=\sum_{j=1}^{|m_i|} (X_{ij}+t_j).
}
Then, $\sum_{i=1}^r Y_i$ can take values $0$ and $1$. This implies that $\sum_{i=1}^m X_i-z$ has a $Ber(1/2)$ component for $m=|m_1|+\dots+|m_r|$ subject to shifting by a fixed integer.

\section*{Acknowledgements}

Fang X. was partially supported by Hong Kong RGC GRF 14302418, 14305821 and a CUHK direct grant.

\bibliographystyle{apalike}
\bibliography{reference}

\end{document}